\documentclass[12pt,reqno]{amsart}
\setlength{\voffset}{-.25in}

\usepackage{amsmath,amssymb}

\textwidth=6.175in
\textheight=9.0in
\headheight=13pt
\calclayout
\theoremstyle{plain}
\newtheorem{thm}{Theorem}[section]
\newtheorem{theorem}[thm]{Theorem}

\newtheorem{proposition}[thm]{Proposition}
\newtheorem{corollary}[thm]{Corollary}
\theoremstyle{definition}
\newtheorem{definition}[thm]{Definition}

\newtheorem{remark}[thm]{Remark}

\allowdisplaybreaks[1]

\newcommand{\bb}{\boldsymbol}
\newcommand{\wt}{\mathrm{wt}}
\newcommand{\dep}{\mathrm{dep}}
\newcommand{\calA}{\mathcal{A}}

\title{Ohno-type identities for multiple harmonic sums}
\author{Shin-ichiro Seki and Shuji Yamamoto}
\address{Mathematical Institute, Tohoku University, 6-3,Aoba, Aramaki, Aoba-Ku, 
Sendai 980-8578, Japan}
\email{shinichiro.seki.b3@tohoku.ac.jp}
\address{Keio Institute of Pure and Applied Sciences (KiPAS), 
Graduate School of Science and Technology, Keio University, 3-14-1 Hiyoshi, 
Kohoku-ku, Yokohama, 223-8522, Japan}
\email{yamashu@math.keio.ac.jp}
\thanks{This work was supported in part by JSPS KAKENHI Grant Numbers 
JP18J00151, JP16H06336, JP16K13742, JP18K03221, 
as well as the KiPAS program 2013--2018 of the Faculty of Science and Technology 
at Keio University. }
\subjclass[2010]{11M32, 11B65.}
\keywords{Multiple harmonic sums, Ohno-type identities, 
Finite multiple zeta values, Sum formulas.}

\begin{document}

\begin{abstract}
We establish Ohno-type identities for multiple harmonic ($q$-)sums 
which generalize Hoffman's identity and Bradley's identity. 
Our result leads to a new proof of the Ohno-type relation for 
$\mathcal{A}$-finite multiple zeta values recently proved 
by Hirose, Imatomi, Murahara and Saito. As a further application, 
we give certain sum formulas for $\calA_2$- or $\calA_3$-finite multiple zeta values. 
\end{abstract}

\maketitle

\section{Introduction}
Let $N$ be a positive integer. Euler \cite{E} proved the following identity for 
the $N$-th harmonic number: 
\begin{equation}\label{eq:Euler}
\sum_{m=1}^N\frac{(-1)^{m-1}}{m}\binom{N}{m}=\sum_{n=1}^N\frac{1}{n}.
\end{equation}
It is known today that there are various generalizations of Euler's identity. 
We call a tuple of positive integers an index. 
For an index $\bb{k}=(k_1, \dots, k_r)$,  we write it in the form
\[
\bb{k}=(\{1\}^{a_1-1}, b_1+1, \dots, \{1\}^{a_{s-1}-1}, b_{s-1}+1, \{1\}^{a_s-1}, b_s),
\]
where $a_1, \dots, a_s, b_1, \dots, b_s$ are positive integers and $\{1\}^a$ means $1, \dots, 1$ repeated $a$ times, and then we define its Hoffman dual $\bb{k}^{\vee}$ by 
\[
\bb{k}^{\vee}:=(a_1, \{1\}^{b_1-1}, a_2+1, \{1\}^{b_2-1}, \dots, a_s+1, \{1\}^{b_s-1}). 
\]
Let $\bb{k}=(k_1, \dots, k_r)$ and $\bb{k}^{\vee}=(l_1, \dots, l_s)$. 
After Roman \cite{Rom} (the case $r=1$) and Hernandez \cite{BDWLH} (the case $s=1$), 
Hoffman \cite{H} proved 
\begin{equation}\label{eq:Hoffman}
\sum_{1\leq m_1\leq\cdots\leq m_r\leq N}
\frac{(-1)^{m_r-1}}{m_1^{k_1}\cdots m_r^{k_r}}\binom{N}{m_r}
=\sum_{1\leq n_1\leq\cdots\leq n_s\leq N}
\frac{1}{n_1^{l_1}\cdots n_s^{l_s}}. 
\end{equation}

There are also $q$-analogs of these identities. 
Let $q$ be a real number satisfying $0<q<1$. 
For an integer $m$, we define the $q$-integer $[m]_q:=\frac{1-q^m}{1-q}$. 
When $0\leq m\leq N$, we define the $q$-factorial $[m]_q!:=\prod_{a=1}^m[a]_q$ 
($[0]_q:=1$) and the $q$-binomial coefficient 
$\binom{N}{m}_q:=\frac{[N]_q!}{[m]_q![N-m]_q!}$. 
Van Hamme \cite{V} proved a $q$-analog of Euler's identity \eqref{eq:Euler}
\[
\sum_{m=1}^N\frac{(-1)^{m-1}q^{\frac{m(m+1)}{2}}}{[m]_q}\binom{N}{m}_q
=\sum_{n=1}^N\frac{q^n}{[n]_q}. 
\]
After Dilcher \cite{D} (the case $r=1$) and Prodinger \cite{P} (the case $s=1$), 
Bradley \cite{B2} proved a $q$-analog of Hoffman's identity (\ref{eq:Hoffman})
\begin{equation}\label{eq:Bradley} 
\begin{split}
\sum_{1\leq m_1\leq\cdots\leq m_r\leq N}
\frac{q^{(k_1-1)m_1+\cdots+(k_r-1)m_r}}{[m_1]_q^{k_1}\cdots [m_r]_q^{k_r}}
&\cdot(-1)^{m_r-1}q^{\frac{m_r(m_r+1)}{2}}\binom{N}{m_r}_q\\
&=\sum_{1\leq n_1\leq\cdots\leq n_s\leq N}
\frac{q^{n_1+\cdots+n_s}}{[n_1]_q^{l_1}\cdots[n_s]_q^{l_s}}. 
\end{split}
\end{equation}

The equality \eqref{eq:Hoffman} or \eqref{eq:Bradley} is a kind of duality 
for multiple harmonic ($q$-)sums. 
Since the duality relations for ($q$-)multiple zeta values are generalized 
to Ohno's relations (\cite{O,B1}), it is natural to ask whether (and how) 
we can generalize \eqref{eq:Hoffman} and \eqref{eq:Bradley} to Ohno-type identities. 
This question was considered by Oyama \cite{Oyama} 
and more recently by Hirose, Imatomi, Murahara and Saito \cite{HIMS}. 
More precisely, they treated identities of the $\mathcal{A}$-finite multiple zeta values, 
that is, congruences modulo prime numbers. 

In this article, we prove Ohno-type identities which generalize \eqref{eq:Bradley} 
(Theorem \ref{MT1}) and \eqref{eq:Hoffman} (Corollary \ref{cor:Ohno}). 
We stress that our formulas are true identities, not congruences. 
This allows us to give, besides a new proof of 
Hirose-Imatomi-Murahara-Saito's relation for $\mathcal{A}$-finite multiple zeta values, 
sum formulas for $\calA_2$- or $\calA_3$-finite multiple zeta values, 
which are congruences modulo square or cube of primes. 

\section{Main Results}
\subsection{Ohno-type identity}
For a tuple of non-negative integers $\bb{e}=(e_1, \dots, e_r)$, 
we define its weight $\wt(\bb{e})$ and depth $\dep(\bb{e})$ 
to be $e_1+\cdots+e_r$ and $r$, respectively. 
Let $J_{e,r}$ be the set of all tuples of non-negative integers $\bb{e}$ 
such that $\wt(\bb{e})=e$, $\dep(\bb{e})=r$, and 
set $J_{\ast,r}:=\bigcup_{e=0}^{\infty}J_{e,r}$. 
For $\bb{e}_1,\bb{e}_2\in J_{\ast,r}$, $\bb{e}_1+\bb{e}_2$ denotes the entrywise sum. 
Similarly, let $I_{k,r}$ be the set of all indices $\bb{k}$ such that $\wt(\bb{k})=k$, 
$\dep(\bb{k})=r$, and set $I_{\ast,r}:=\bigcup_{k=0}^{\infty}I_{k,r}$. 
By convention, $I_{\ast,0}=\{\varnothing\}$ is the set consisting only of the empty index. 

For $\bb{k}=(k_1,\dots,k_r)\in I_{\ast,r}$ and $\bb{e}=(e_1,\dots,e_r) \in J_{\ast,r}$, put 
\[
b(\bb{k};\bb{e}):=\prod_{i=1}^r\binom{k_i+e_i+\delta_{i1}+\delta_{ir}-2}{e_i},
\]
where $\delta_{ij}$ is Kronecker's delta. Here, we use the convention that
\[
\binom{e-1}{e}=\begin{cases}1 & (e=0), \\ 0 &(e>0). \end{cases}
\]
For a positive integer $N$, $\bb{k}=(k_1,\dots,k_r) \in I_{\ast, r}$ and 
$\bb{e}=(e_1,\dots,e_r) \in J_{\ast,r}$, 
we define the multiple harmonic $q$-sums 
$H_N^{\star}(\bb{k};q)$ and $z_N^{\star}(\bb{k};\bb{e};q)$ by
\begin{align*}
H_N^{\star}(\bb{k};q)&:=\sum_{1 \leq m_1\leq\cdots\leq m_r\leq N}
\frac{q^{(k_1-1)m_1+\cdots+(k_r-1)m_r}}{[m_1]_q^{k_1}\cdots[m_r]_q^{k_r}}
\cdot(-1)^{m_r-1}q^{\frac{m_r(m_r+1)}{2}}\binom{N}{m_r}_q,\\
z_N^{\star}(\bb{k};\bb{e};q)&:=\sum_{1\leq m_1\leq\cdots\leq m_r\leq N}
\frac{q^{(e_1+1)m_1+\cdots+(e_r+1)m_r}}{[m_1]_q^{k_1+e_1}\cdots[m_r]_q^{k_r+e_r}}.
\end{align*}
We set $z_N^{\star}(\bb{k};q):=z_N^{\star}(\bb{k};\{0\}^r;q)$ and 
$z_N^{\star}(\varnothing;q):=1$. 
The first main result is the following:

\begin{theorem}\label{MT1}
Let $N$ be a positive integer, $e$ a non-negative integer and 
$\bb{k}\in I_{\ast,r}$ an index. 
Set $s:=\dep(\bb{k}^{\vee})$. Then we have
\begin{equation}\label{q-Ohno}
\sum_{\bb{e} \in J_{e,r}}b(\bb{k};\bb{e})H_N^{\star}(\bb{k}+\bb{e};q)
=\sum_{j=0}^ez_N^{\star}(\{1\}^{e-j};q)
\sum_{\bb{e'} \in J_{j,s}}z_N^{\star}(\bb{k}^{\vee};\bb{e'};q).
\end{equation}
\end{theorem}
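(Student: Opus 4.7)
My plan is to recast the identity as an equality of formal power series in a variable $y$ dual to $e$, and then to establish this series identity by a connected-sum argument. The passage to the generating function rests on the coefficient identity
\[
\sum_{\bb{e}\in J_{e,r}}b(\bb{k};\bb{e})\prod_{i=1}^{r}u_i^{e_i}
=[y^e]\prod_{i=1}^{r}(1-yu_i)^{-c_i},
\qquad c_i:=k_i+\delta_{i1}+\delta_{ir}-1,
\]
together with its simpler analogue $\sum_{\bb{e'}\in J_{j,s}}\prod_{i=1}^{s} v_i^{e'_i}=[y^j]\prod_{i=1}^{s}(1-yv_i)^{-1}$; here one takes $u_i=q^{m_i}/[m_i]_q$ and $v_i=q^{n_i}/[n_i]_q$. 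A useful side observation is that $\sum_{t\ge 0}y^t z_N^\star(\{1\}^t;q)=\prod_{n=1}^{N}(1-yq^n/[n]_q)^{-1}$, so the right-hand side of \eqref{q-Ohno}, summed over $e$ against $y^e$, becomes this product times a single nested $s$-fold sum whose summand is rational in $y$ with simple poles at $yq^{n_i}=[n_i]_q$.

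To prove the resulting series identity I would adopt a \emph{connected-sum} strategy. Introduce an interpolating family $Z_N(\bb{k}_L;\bb{k}_R;y;q)$ depending on two indices, with a connector mediating between an $H^\star$-type summation over the $\bb{k}_L$-slots (incorporating the $q$-binomial $(-1)^{m-1}q^{m(m+1)/2}\binom{N}{m}_q$) and a $z^\star$-type summation over the $\bb{k}_R$-slots (with $y$-deformed poles $([n]_q-yq^n)^{-1}$). The two extreme cases $(\bb{k}_L,\bb{k}_R)=(\bb{k},\varnothing)$ and $(\varnothing,\bb{k}^{\vee})$ should specialise respectively to the generating-function forms of the two sides of \eqref{q-Ohno}. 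Local \emph{transport relations}, proved by partial-fraction manipulations of $1/([m]_q^a[n]_q^b)$ across the connector, then move one unit of weight from $\bb{k}_L$ to $\bb{k}_R$ along the pairing between their slots induced by Hoffman duality. Iterating the transport converts the first boundary case into the second, and extracting the coefficient of $y^e$ yields the theorem.

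The main obstacle is the bookkeeping of the boundary corrections encoded by $\delta_{i1}$ and $\delta_{ir}$ in $c_i$. These reflect that the innermost slot of $\bb{k}$ carries the lower bound $m_1\ge 1$, while the outermost slot interacts with the factor $\binom{N}{m_r}_q$; the connector must therefore encode both endpoints in a form compatible with a single, uniform transport step, so that the $r=1$ case (where the two $\delta$'s stack) and the interior slots are handled by the same telescoping. Verifying that no spurious remainders appear, and that the global factor $\prod_{n=1}^{N}(1-yq^n/[n]_q)^{-1}$ on the right-hand side emerges cleanly as the residual of the transport chain, will be the central computation.
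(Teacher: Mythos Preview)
Your plan is essentially the paper's proof: form the generating series
$P_N(\bb{k};q;x)=\sum_{e\ge 0}\sum_{\bb{e}\in J_{e,r}}b(\bb{k};\bb{e})H_N^\star(\bb{k}+\bb{e};q)\,x^e$,
$Q_N(\bb{k}^\vee;q;x)=\sum_{e\ge 0}\sum_{\bb{e'}\in J_{e,s}}z_N^\star(\bb{k}^\vee;\bb{e'};q)\,x^e$,
$R_N(q;x)=\prod_{h=1}^N(1-q^hx/[h]_q)^{-1}$, and prove $P_N=Q_N R_N$ via a connected sum
$Z_N^\star(\bb{k};\bb{l};q;x)$ interpolating between the two boundary values by transport identities along the Hoffman-dual path.

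Two points where your sketch diverges from what actually works. First, the deformation must sit on \emph{both} sides and in the connector: on the left one needs factors $([m_i]_q-q^{m_i}x)^{-(k_i-1)}$ together with extra simple poles at $i=1$ and $i=r$ (this is exactly what produces your exponents $c_i$ and handles the $\delta_{i1},\delta_{ir}$ bookkeeping), while the connector carries a numerator $\prod_{h=1}^{n_1}([h]_q-q^hx)$. It is this numerator, evaluated at $n_1=N$, that inserts the factor $R_N^{-1}$ at the $(\bb{k};\varnothing)$ boundary, so $R_N$ is already present there rather than emerging as a residual of transport. Second, the transport steps are proved by \emph{telescoping} identities for the explicit connector
\[
C(m,n,q,x)=(-1)^{m-1}q^{m(m+1)/2}\,\frac{\prod_{h=1}^{n}([h]_q-q^hx)}{[m]_q!\,[n-m]_q!},
\]
not by partial fractions in $1/([m]_q^a[n]_q^b)$; a partial-fraction identity is used only once, at the terminal boundary $Z_N^\star(0;\bb{k}^\vee;q;x)$, to sum out the residual left variable and recover $Q_N$.
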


The case $e=0$ gives Bradley's identity $H_N^{\star}(\bb{k};q)=z_N^{\star}(\bb{k}^{\vee};q)$. 
We will prove \eqref{q-Ohno} by using a certain \emph{connected sum} 
in \S\ref{sec:connected sum}, based on the same idea used in another paper of 
the authors \cite{SY}. 
This proof is new even if one specializes it to Hoffman's identity. 

Let 
\begin{align}
H_N^{\star}(\bb{k})&:=\lim_{q\to 1}H_N^{\star}(\bb{k};q)
=\sum_{1 \leq m_1\leq\cdots\leq m_r\leq N}
\frac{(-1)^{m_r-1}}{m_1^{k_1}\cdots m_r^{k_r}}\binom{N}{m_r}, \notag\\
\zeta_N^\star(\bb{k})&:=\lim_{q\to 1}z_N^{\star}(\bb{k};q)
=\sum_{1\leq m_1\leq\cdots\leq m_r\leq N}
\frac{1}{m_1^{k_1}\cdots m_r^{k_r}}. \label{eq:zeta^star_N}
\end{align}
By taking the limit $q\to 1$ in \eqref{q-Ohno}, we obtain the following:

\begin{corollary}\label{cor:Ohno}
Let $N$ be a positive integer, $e$ a non-negative integer and 
$\bb{k}\in I_{\ast,r}$ an index. 
Set $s:=\dep(\bb{k}^{\vee})$. Then we have
\begin{equation}\label{Ohno}
\sum_{\bb{e} \in J_{e,r}}b(\bb{k};\bb{e})H_N^{\star}(\bb{k}+\bb{e})
=\sum_{j=0}^e\zeta_N^{\star}(\{1\}^{e-j})
\sum_{\bb{e'} \in J_{j,s}}\zeta_N^{\star}(\bb{k}^{\vee}+\bb{e'}).
\end{equation}
\end{corollary}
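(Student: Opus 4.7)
The plan is to obtain \eqref{Ohno} from Theorem \ref{MT1} by taking $q\to 1^-$ on both sides of \eqref{q-Ohno}. Because $J_{e,r}$ is finite and the combinatorial coefficients $b(\bb{k};\bb{e})$ do not depend on $q$, the left-hand side of \eqref{q-Ohno} is a finite linear combination of multiple harmonic $q$-sums; the right-hand side is similarly a finite double sum over $0\le j\le e$ and $\bb{e'}\in J_{j,s}$. Hence the limit commutes with each sum, and it suffices to verify the limit of each individual summand.

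Two of the three limits needed are already stated in the paragraph preceding the corollary: namely $\lim_{q\to 1^-}H_N^{\star}(\bb{k}+\bb{e};q)=H_N^{\star}(\bb{k}+\bb{e})$ for each $\bb{e}\in J_{e,r}$, and $\lim_{q\to 1^-}z_N^{\star}(\{1\}^{e-j};q)=\zeta_N^{\star}(\{1\}^{e-j})$. The remaining claim to verify is
\[
\lim_{q\to 1^-}z_N^{\star}(\bb{k}^{\vee};\bb{e'};q)=\zeta_N^{\star}(\bb{k}^{\vee}+\bb{e'})
\]
for every $\bb{e'}\in J_{j,s}$. Writing $\bb{k}^{\vee}=(l_1,\dots,l_s)$ and $\bb{e'}=(e'_1,\dots,e'_s)$, I would check this termwise: since $[m_i]_q\to m_i$ and $q^{(e'_i+1)m_i}\to 1$ as $q\to 1^-$, the summand
\[
\frac{q^{(e'_1+1)m_1+\cdots+(e'_s+1)m_s}}{[m_1]_q^{l_1+e'_1}\cdots[m_s]_q^{l_s+e'_s}}
\]
tends to $1/(m_1^{l_1+e'_1}\cdots m_s^{l_s+e'_s})$, which is exactly the corresponding summand of $\zeta_N^{\star}(\bb{k}^{\vee}+\bb{e'})$.

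Substituting these limits into \eqref{q-Ohno} yields \eqref{Ohno}. There is no real obstacle here: each $[m]_q$ with $m\geq 1$ is a polynomial in $q$ whose value at $q=1$ equals $m>0$, so every summand is a rational function of $q$ that is continuous and nonzero at $q=1$, and the termwise passage to the limit requires no uniform-convergence argument.
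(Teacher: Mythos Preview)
Your proof is correct and follows exactly the paper's approach: the paper simply states that \eqref{Ohno} is obtained ``by taking the limit $q\to 1$ in \eqref{q-Ohno},'' and you have supplied the (entirely routine) verification that each summand is continuous at $q=1$ and that the finitely many sums commute with the limit. The only point you added beyond the paper's one-line justification is the explicit check that $z_N^{\star}(\bb{k}^{\vee};\bb{e'};q)\to\zeta_N^{\star}(\bb{k}^{\vee}+\bb{e'})$, which is immediate from the definitions.
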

The case $e=0$ gives Hoffman's identity 
$H_N^{\star}(\bb{k})=\zeta_N^{\star}(\bb{k}^{\vee})$. 

For an application of \eqref{Ohno}, we recall $\calA$-finite multiple zeta values. 
First we define a $\mathbb{Q}$-algebra $\calA$ by
\[
\calA:=\Biggl(\prod_{p\colon\text{prime}}\mathbb{Z}/p\mathbb{Z} \Biggr) 
\Biggm/ \Biggl(\bigoplus_{p\colon\text{prime}}\mathbb{Z}/p\mathbb{Z}\Biggr).
\]
For a positive integer $N$ and an index $\bb{k}=(k_1,\dots,k_r)\in I_{\ast,r}$, 
we define a multiple harmonic sum $\zeta_N(\bb{k})$ by
\[
\zeta_N(\bb{k}):=\sum_{1\leq m_1<\cdots<m_r\leq N}
\frac{1}{m_1^{k_1}\cdots m_r^{k_r}}
\]
(compare with $\zeta_N^\star(\bb{k})$ given in \eqref{eq:zeta^star_N}). 
We set $\zeta_N(\varnothing)=\zeta_N^{\star}(\varnothing)=1$ by convention. 
Then the $\calA$-finite multiple zeta values $\zeta_{\calA}(\bb{k})$ and
$\zeta_{\calA}^{\star}(\bb{k})$ are defined by
\[
\zeta_{\calA}(\bb{k}):=\bigl(\zeta_{p-1}(\bb{k})\bmod p\bigr)_p, \quad 
\zeta_{\calA}^{\star}(\bb{k}):=\bigl(\zeta_{p-1}^{\star}(\bb{k})\bmod p\bigr)_p \in \calA. 
\]

Since $(-1)^{m-1}\binom{p-1}{m} \equiv -1 \pmod{p}$ holds for any prime $p$ 
greater than $m$, we have 
\[\bigl(H_{p-1}^\star(\bb{k})\bmod p\bigr)_p=-\zeta_\calA^\star(\bb{k}). \]
Moreover, it is known that $\zeta_\calA^\star(\{1\}^e)=0$ for $e>0$, 
while $\zeta_\calA^\star(\varnothing)=1$. 
Hence we obtain the following relation among 
$\calA$-finite multiple zeta values as a corollary of \eqref{Ohno}. 

\begin{corollary}[Hirose-Imatomi-Murahara-Saito \cite{HIMS}]
Let $e$ be a non-negative integer and $\bb{k} \in I_{\ast,r}$ an index. 
Set $s:=\dep(\bb{k}^{\vee})$. Then we have
\[
\sum_{\bb{e} \in J_{e,r}}b(\bb{k};\bb{e})\zeta_{\calA}^{\star}(\bb{k}+\bb{e})
=-\sum_{e' \in J_{e,s}}\zeta_{\calA}^{\star}(\bb{k}^{\vee}+\bb{e'}).
\]
\end{corollary}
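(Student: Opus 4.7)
The plan is to derive the corollary directly from Corollary~\ref{cor:Ohno} by specializing $N=p-1$ for each prime $p$, reducing both sides of \eqref{Ohno} modulo $p$, and assembling the resulting congruences into an identity in $\calA$ by taking the tuple over all primes.

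On the left-hand side I would apply the formula $(H_{p-1}^\star(\bb{k})\bmod p)_p = -\zeta_\calA^\star(\bb{k})$ recorded in the excerpt. Since each coefficient $b(\bb{k};\bb{e})$ is a product of binomial coefficients, hence an integer, it passes through the reduction untouched, so
\[
\Bigl(\sum_{\bb{e}\in J_{e,r}} b(\bb{k};\bb{e})\, H_{p-1}^\star(\bb{k}+\bb{e})\bmod p\Bigr)_p
= -\sum_{\bb{e}\in J_{e,r}} b(\bb{k};\bb{e})\,\zeta_\calA^\star(\bb{k}+\bb{e})
\]
in $\calA$.

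Next I would simplify the right-hand side using the two recorded facts $\zeta_\calA^\star(\{1\}^n) = 0$ for $n>0$ and $\zeta_\calA^\star(\varnothing)=1$. The former kills every term in the outer sum
\[
\sum_{j=0}^{e} \zeta_\calA^\star(\{1\}^{e-j}) \sum_{\bb{e'}\in J_{j,s}}\zeta_\calA^\star(\bb{k}^\vee+\bb{e'})
\]
except the one at $j=e$, where the latter trivializes the outer factor. What remains is exactly $\sum_{\bb{e'}\in J_{e,s}}\zeta_\calA^\star(\bb{k}^\vee+\bb{e'})$. Equating the two simplified sides and moving the overall minus sign yields the stated identity.

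There is essentially no serious obstacle: the derivation is a short congruence argument, and every auxiliary input is explicitly recorded in the excerpt. The only subtlety worth flagging is that $\zeta_{p-1}^\star(\{1\}^n)$ is not itself zero for individual primes $p$; one only needs the vanishing to hold modulo $p$ for all but finitely many $p$, which is precisely the meaning of $\zeta_\calA^\star(\{1\}^n)=0$ under the definition of $\calA$ as a quotient by the direct sum of the $\mathbb{Z}/p\mathbb{Z}$.
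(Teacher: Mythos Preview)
Your proposal is correct and follows exactly the same route as the paper: specialize \eqref{Ohno} at $N=p-1$, use $(H_{p-1}^\star(\bb{k})\bmod p)_p=-\zeta_\calA^\star(\bb{k})$ on the left, and collapse the right-hand side to the $j=e$ term via $\zeta_\calA^\star(\{1\}^n)=0$ for $n>0$ and $\zeta_\calA^\star(\varnothing)=1$. The paper presents this argument in the paragraph immediately preceding the corollary rather than in a separate proof environment, but the content is identical.
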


\subsection{Sum formulas for finite multiple zeta values}
Before stating our second main result, 
let us recall the sum formulas for $\calA$-finite multiple zeta values. 
First, it is easily seen that 
\begin{equation}\label{eq:k,r sum A}
\sum_{\bb{k}\in I_{k,r}}\zeta_\calA(\bb{k})
=\sum_{\bb{k}\in I_{k,r}}\zeta_\calA^\star(\bb{k})=0, 
\end{equation}
but this is not an analog of the sum formula for the multiple zeta values \cite{G}, 
since the admissibility condition $k_r\geq 2$ is ignored in \eqref{eq:k,r sum A}. 
A more precise analog (and its generalization) is due to Saito-Wakabayashi \cite{SW}. 
For integers $k, r$ and $i$ satisfying $1\leq i\leq r<k$, 
we put $I_{k,r,i}:=\{(k_1, \dots, k_r) \in I_{k,r}\mid k_i \geq 2\}$ and 
$B_{\bb{p}-k}:=(B_{p-k}\bmod{p})_p \in \calA$, 
where $B_n$ denotes the $n$-th Seki-Bernoulli number. 
Note that $B_{\bb{p}-k}=0$ if $k$ is even. 

\begin{theorem}[Saito-Wakabayashi \cite{SW}]\label{SW-thm}
Let $k, r$ and $i$ be integers satisfying $1\leq i\leq r<k$. 
Then, in the ring $\calA$, we have equalities 
\begin{align*}
\sum_{\bb{k} \in I_{k, r, i}}\zeta_{\calA}(\bb{k})
&=(-1)^{i}\biggl\{\binom{k-1}{i-1}+(-1)^r\binom{k-1}{r-i}\biggr\}\frac{B_{\bb{p}-k}}{k},\\
\sum_{\bb{k} \in I_{k, r, i}}\zeta_{\calA}^{\star}(\bb{k})
&=(-1)^{i}\biggl\{\binom{k-1}{r-i}+(-1)^r\binom{k-1}{i-1}\biggr\}\frac{B_{\bb{p}-k}}{k}. 
\end{align*}
\end{theorem}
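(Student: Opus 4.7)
The plan is to prove Theorem~\ref{SW-thm} by combining three classical ingredients: the reflection symmetry of finite multiple zeta values, a partial-fraction generating-function argument, and Glaisher-type congruences relating power sums modulo $p$ to Bernoulli numbers. The Ohno-type machinery of Corollary~\ref{cor:Ohno} does not yield the theorem in a single step, since its naive specialization only recovers the trivial sum formula \eqref{eq:k,r sum A}; extracting the admissibility-restricted sum requires a more delicate analysis.

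First, I would establish the reflection identity
\[
\zeta_\calA(k_1,\dots,k_r)=(-1)^{k_1+\cdots+k_r}\zeta_\calA(k_r,\dots,k_1)
\]
in $\calA$, obtained by substituting $m_j\mapsto p-m_{r+1-j}$ in the definition of $\zeta_{p-1}(\bb{k})$ and invoking $(p-m)^{-1}\equiv -m^{-1}\pmod p$. This reflection bijects $I_{k,r,i}$ with $I_{k,r,r+1-i}$, so summing it over $I_{k,r,i}$ produces the combination $\binom{k-1}{i-1}+(-1)^r\binom{k-1}{r-i}$ and reduces the theorem to the single case $i=1$, i.e., admissibility at the first entry.

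Next, for $i=1$, I would express the admissibility-restricted inner sum as a residue,
\[
\sum_{\bb{k}\in I_{k,r,1}}\frac{1}{m_1^{k_1}\cdots m_r^{k_r}}=[t^{k-r-1}]\frac{1}{m_1(m_1-t)}\prod_{j=2}^{r}\frac{1}{m_j-t},
\]
and then perform a partial-fraction decomposition in $t$ with simple poles at $t=m_j$ before summing over $1\le m_1<\cdots<m_r\le p-1$. Using the vanishing $\sum_{m=1}^{p-1}m^{-k}\equiv 0\pmod p$ for $2\le k\le p-2$, most residue contributions cancel modulo $p$, and a careful accounting of the surviving combinations reduces to single weighted power sums congruent to $-B_{p-k}/k\pmod p$. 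The $\star$-version then follows from the non-$\star$ version by the stuffle relation $\zeta_\calA^\star(\bb{k})=\sum_{\bb{l}\succeq\bb{k}}\zeta_\calA(\bb{l})$ combined with the same reflection symmetry applied to $\zeta_\calA^\star$.

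The hard part will be the combinatorial accounting in the partial-fraction step: the admissibility constraint $k_1\ge 2$ singles out one variable, breaking the symmetry among the $r$ factors, and isolating the coefficient $(-1)^i\binom{k-1}{i-1}/k$ in front of the Bernoulli term requires either a careful induction on $r$ or an explicit evaluation of certain depth-two finite zeta values in terms of Bernoulli numbers.
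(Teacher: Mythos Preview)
The paper does not contain a proof of Theorem~\ref{SW-thm}; it is quoted verbatim from Saito--Wakabayashi~\cite{SW} as a known result and used only as input in \S\ref{sec:sum formula} and \S\ref{sec:A_3-sum formula}. There is therefore no ``paper's own proof'' to compare your proposal against.

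Regarding your proposal on its own merits: the overall shape (reflection symmetry, a partial-fraction generating function for the admissibility-restricted inner sum, then Glaisher-type evaluation of the surviving power sums) is indeed close to the strategy of~\cite{SW}. But your reduction step is flawed. The reflection identity $\zeta_\calA(k_1,\dots,k_r)=(-1)^k\zeta_\calA(k_r,\dots,k_1)$, summed over $I_{k,r,i}$, only yields
\[
\sum_{\bb{k}\in I_{k,r,i}}\zeta_\calA(\bb{k})=(-1)^k\sum_{\bb{k}\in I_{k,r,r+1-i}}\zeta_\calA(\bb{k}),
\]
which relates the value at $i$ to the value at $r+1-i$; it does \emph{not} reduce the general case to $i=1$. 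What reflection gives you is a consistency check (the claimed formula is indeed invariant under $i\leftrightarrow r+1-i$ up to $(-1)^k$, harmless since $B_{\bb{p}-k}=0$ for even $k$), not a reduction. To obtain the two separate binomial coefficients $\binom{k-1}{i-1}$ and $\binom{k-1}{r-i}$ you must carry out the partial-fraction computation with the admissibility condition placed at the $i$-th slot from the outset, or else supply an independent recursion in $i$; your sketch does neither.
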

In particular, if $k$ is even, we see that 
\begin{equation}\label{eq:SW even}
\sum_{\bb{k} \in I_{k,r,i}}\zeta_{\calA}(\bb{k})
=\sum_{\bb{k} \in I_{k,r,i}}\zeta_{\calA}^{\star}(\bb{k})=0. 
\end{equation}
Our aim is to lift the identities \eqref{eq:k,r sum A} and \eqref{eq:SW even} in $\calA$, 
which represent systems of congruences modulo (almost all) primes $p$, 
to congruences modulo $p^2$ or $p^3$, by using the identity \eqref{Ohno}. 

Let $n$ be a positive integer. In accordance with \cite{Ros,S,Z2}, we define a $\mathbb{Q}$-algebra $\calA_n$ by
\[
\calA_n:=\Biggl(\prod_{p\colon \text{prime}}\mathbb{Z}/p^n\mathbb{Z}\Biggr) 
\Biggm/ \Biggl(\bigoplus_{p\colon \text{prime}}\mathbb{Z}/p^n\mathbb{Z}\Biggr)
\]
and the $\calA_n$-finite multiple zeta values 
$\zeta_{\calA_n}(\bb{k})$ and $\zeta_{\calA_n}^{\star}(\bb{k})$ by
\[
\zeta_{\calA_n}(\bb{k}):=(\zeta_{p-1}(\bb{k})\bmod{p^n})_p, \quad 
\zeta_{\calA_n}^{\star}(\bb{k}):=(\zeta_{p-1}^{\star}(\bb{k})\bmod{p^n})_p \in \calA_n.
\]
We use the symbol $B_{\bb{p}-k}$ again to denote 
the element $(B_{p-k} \bmod{p^n})_p$ of $\calA_n$, 
and put $\bb{p}:=(p\bmod{p^n})_p \in \calA_n$. 
Then our second main result is the following: 

\begin{theorem}[= Proposition \ref{S_A_2,k,r} + Theorem \ref{thm:A_3-sum} + Theorem \ref{thm:k,r,i sum A_2}]
\label{MT2}
Let $k, r$ be positive integers satisfying $r\leq k$. Then, in the ring $\mathcal{A}_2$, we have
\[
\sum_{\bb{k} \in I_{k, r}}\zeta_{\calA_2}(\bb{k})
=(-1)^{r-1}\binom{k}{r}\frac{B_{\bb{p}-k-1}}{k+1}\bb{p},\quad 
\sum_{\bb{k} \in I_{k, r}}\zeta_{\calA_2}^{\star}(\bb{k})
=\binom{k}{r}\frac{B_{\bb{p}-k-1}}{k+1}\bb{p}.
\]
If $k$ is odd, in the ring $\calA_3$, we have
\[
\sum_{\bb{k} \in I_{k, r}}\zeta_{\calA_3}(\bb{k})
=(-1)^r\frac{k+1}{2}\binom{k}{r}\frac{B_{\bb{p}-k-2}}{k+2}\bb{p}^2,\quad 
\sum_{\bb{k} \in I_{k, r}}\zeta_{\calA_3}^{\star}(\bb{k})
=-\frac{k+1}{2}\binom{k}{r}\frac{B_{\bb{p}-k-2}}{k+2}\bb{p}^2.
\]
Furthermore, let $i$ be an integer satisfying $1\leq i\leq r$ and 
we assume that $k$ is even and greater than $r$. Then the equalities 
\[
\sum_{\bb{k} \in I_{k, r,i}}\zeta_{\calA_2}(\bb{k})
=(-1)^{r-1}\frac{a_{k,r,i}}{2}\cdot\frac{B_{\bb{p}-k-1}}{k+1}\bb{p},\quad 
\sum_{\bb{k} \in I_{k, r, i}}\zeta_{\calA_2}^{\star}(\bb{k})
=\frac{b_{k,r,i}}{2}\cdot\frac{B_{\bb{p}-k-1}}{k+1}\bb{p}
\]
hold in $\calA_2$. Here the coefficients $a_{k,r,i}$ and $b_{k,r,i}$ are given by 
\begin{align*}
a_{k,r,i}&:=\binom{k-1}{r}
+(-1)^{r-i}\biggl\{(k-r)\binom{k}{i-1}+\binom{k-1}{i-1}+(-1)^{r-1}\binom{k-1}{r-i}\biggr\},\\
b_{k,r,i}&:=\binom{k-1}{r}+
(-1)^{i-1}\biggl\{(k-r)\binom{k}{r-i}+\binom{k-1}{r-i}+(-1)^{r-1}\binom{k-1}{i-1}\biggr\}.
\end{align*}
\end{theorem}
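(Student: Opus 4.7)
The engine is Corollary \ref{cor:Ohno} at $N=p-1$, specialised to the depth-one base index $\bb{k}=(r)$, whose Hoffman dual is $(\{1\}^r)$. One computes $b((r);(e))=\binom{r+e}{e}$; setting $e=k-r$, the left-hand side of \eqref{Ohno} collapses to $\binom{k}{r}H_{p-1}^\star(k)$, while the inner sum on the right reads
\[
\sum_{\bb{e'}\in J_{j,r}}\zeta_{p-1}^\star((\{1\}^r)+\bb{e'})=S_{r+j,r}^\star,\qquad S_{m,r}^\star:=\sum_{\bb{l}\in I_{m,r}}\zeta_{p-1}^\star(\bb{l}).
\]
Thus the corollary yields the triangular master identity
\[
\binom{k}{r}\,H_{p-1}^\star(k)=\sum_{j=0}^{k-r}\zeta_{p-1}^\star(\{1\}^{k-r-j})\,S_{r+j,r}^\star,
\]
valid in $\mathbb{Z}$, which determines $S_{k,r}^\star$ inductively in $k-r$ from the left-hand side and the lower-weight sums $S_{r,r}^\star,\dots,S_{k-1,r}^\star$.

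Into this recursion I would feed two inputs. First, the classical Glaisher--Wolstenholme congruences, expressing $\zeta_{p-1}(n)$ modulo $p^2$ and $p^3$ as explicit rational multiples of $B_{\bb{p}-n-1}\,\bb{p}$ or $B_{\bb{p}-n-2}\,\bb{p}^2$ according to the parity of $n$, together with the analogous congruences for $\zeta_{p-1}(\{1\}^n)$. Second, the exact expansion
\[
(-1)^{m-1}\binom{p-1}{m}=-\prod_{i=1}^m\Bigl(1-\tfrac{p}{i}\Bigr),
\]
which rewrites $H_{p-1}^\star(k)$ modulo $p^2$ (resp.\ $p^3$) as an explicit $\mathbb{Q}$-linear combination of star multiple harmonic sums governed by the same Bernoulli congruences. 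Because $\zeta_{\calA}^\star(\{1\}^e)=0$ for $e>0$ (as noted in the excerpt), every term with $j<k-r$ in the recursion is at least one order of $p$ smaller than $S_{k,r}^\star$ itself; hence the constant-in-$p$ equation recovers \eqref{eq:k,r sum A}, the $p^1$-equation yields the $\calA_2$-sum formulas, and, when $k$ is odd so that the $p^1$ piece vanishes, the $p^2$-equation yields the $\calA_3$-formulas. The unstarred versions follow from the standard expansion of $\zeta_N^\star$ as a sum of $\zeta_N$ over coarsenings of the index, applied after summing over $I_{k,r}$.

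For the admissibility-refined sums $\sum_{\bb{k}\in I_{k,r,i}}\zeta_{\calA_2}^\star(\bb{k})$ and $\sum_{\bb{k}\in I_{k,r,i}}\zeta_{\calA_2}(\bb{k})$ (with $k$ now even), I would apply Corollary \ref{cor:Ohno} to base indices whose Hoffman duals realise the admissibility constraint at position $i$---for instance $\bb{k}=(\{1\}^{i-1},r-i+1)$ with dual $(i,\{1\}^{r-i})$ and its mirror image, which together furnish two independent identities whose $\zeta^\star$-sums are supported in $I_{k,r,i}$. A suitable linear combination of these with the unrestricted $\calA_2$-formula already proved, together with the Saito--Wakabayashi mod-$p$ theorem (Theorem \ref{SW-thm}) used as the leading-order input, should produce the coefficients $a_{k,r,i}$ and $b_{k,r,i}$: their constituent pieces $\binom{k-1}{r}$, $(k-r)\binom{k}{i-1}$, and $\binom{k-1}{i-1}\pm\binom{k-1}{r-i}$ are precisely the shapes one expects from such a combination.

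The main obstacle I foresee is the mod-$p^3$ bookkeeping needed for the odd-$k$ $\calA_3$ statement: the expansions of $H_{p-1}^\star(k)$ and of the power sums $\zeta_{p-1}^\star(\{1\}^e)$ must each be carried two orders in $p$, and several apparently independent $O(p^2)$ contributions must be verified to collapse into the single Bernoulli term $-\tfrac{k+1}{2}\binom{k}{r}\tfrac{B_{\bb{p}-k-2}}{k+2}\bb{p}^2$. The admissibility-restricted identity is more delicate still: pinpointing the exact linear combination of Ohno identities that isolates $I_{k,r,i}$ while preserving the clean Bernoulli structure is where the real work lies.
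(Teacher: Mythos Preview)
Your treatment of the unrestricted sums $S_{k,r}$, $S_{k,r}^\star$, $T_{k,r}$, $T_{k,r}^\star$ matches the paper's: specialise \eqref{Ohno} to $\bb{k}=(r)$, expand $(-1)^{m-1}\binom{p-1}{m}$ to the required order in $p$, and observe that the cross terms with $j<k-r$ vanish by parity of Bernoulli numbers. For the passage from star to non-star the paper uses the antipode relation \eqref{non-star to star} rather than a coarsening argument, which is cleaner but equivalent in effect.

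The genuine gap is the $I_{k,r,i}$ case. Your proposed base index $\bb{k}=(\{1\}^{i-1},r-i+1)$ has depth $i$, and its dual $(i,\{1\}^{r-i})$ has depth $r-i+1$; neither side of \eqref{Ohno} for this choice involves depth-$r$ indices, so the resulting identity has nothing to do with $S_{k,r,i}^\star$. The correct specialisation is $\bb{k}=(i,r-i+1)$, whose Hoffman dual $(\{1\}^{i-1},2,\{1\}^{r-i})$ is the minimal element of $I_{r+1,r,i}$: then the $\zeta^\star$-side of \eqref{Ohno} is precisely $\sum_j \zeta^\star_{p-1}(\{1\}^{k-r-1-j})\,S^\star_{r+1+j,r,i}$, while the $H^\star$-side is a $b$-weighted sum of $H_{p-1}^\star(i+e,k-i-e)$ over $0\le e\le k-r-1$. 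Expanding these depth-two $H^\star$ values modulo $p^2$ produces depth-two and depth-three star values evaluable by \eqref{A-2}, \eqref{A-3}, \eqref{A_2-2}; a Vandermonde identity and a partial-fraction decomposition collapse the $e$-sum directly to $b_{k,r,i}$. No linear combination of several Ohno identities is needed. The non-star formula then comes from summing \eqref{non-star to star} over $I_{k,r,r+1-i}$, giving $S_{k,r,i}=(-1)^{r-1}S^\star_{k,r,r+1-i}$ and hence $a_{k,r,i}=b_{k,r,r+1-i}$.
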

We will prove this theorem in \S\ref{sec:sum formula} and \S\ref{sec:A_3-sum formula}. 

\section{The proof of Theorem \ref{MT1}}\label{sec:connected sum}
\begin{definition}[Connected sum]
Let $N$ be a positive integer, $q$ a real number satisfying $0<q<1$ and 
$x$ an indeterminate. Let $r>0$ and $s\geq0$ be integers. 
For $\bb{k}=(k_1,\dots,k_r) \in J_{\ast, r}$ satisfying $k_1, \dots, k_{r-1} \geq 1$ 
and $\bb{l}=(l_1, \dots, l_s) \in I_{\ast, s}$, 
we define a formal power series $Z_N^{\star}(\bb{k};\bb{l};q;x)$ in $x$ by
\[
Z_N^{\star}(\bb{k};\bb{l};q;x)
:=\sum_{1\leq m_1\leq\cdots\leq m_r\leq n_1\leq\cdots\leq n_s\leq n_{s+1}=N}
F_1(\bb{k};\bb{m};q;x)C(m_r,n_1,q,x)F_2(\bb{l};\bb{n};q;x),
\]
where
\begin{align*}
F_1(\bb{k};\bb{m};q;x)
&:=\frac{[m_1]_q}{[m_1]_q-q^{m_1}x}
\prod_{i=1}^r\frac{q^{(k_i-1)m_i}}{[m_i]_q([m_i]_q-q^{m_i}x)^{k_i-1}}
\cdot\frac{[m_r]_q}{[m_r]_q-q^{m_r}x},\\
C(m_r,n_1,q,x)
&:=(-1)^{m_r-1}q^{\frac{m_r(m_r+1)}{2}}
\frac{\prod_{h=1}^{n_1}([h]_q-q^hx)}{[m_r]_q![n_1-m_r]_q!},\\
F_2(\bb{l};\bb{n};q;x)
&:=\prod_{j=1}^s\frac{q^{n_j}}{([n_j]_q-q^{n_j}x)[n_j]_q^{l_j-1}}
\end{align*}
for $\bb{m}=(m_1,\dots, m_r)$ and $\bb{n}=(n_1,\dots, n_s)$.
\end{definition}

\begin{remark}
The sum $Z_N^{\star}(\bb{k};\bb{l};q;x)$ consists of two parts 
\[\sum_{1\leq m_1\leq\cdots\leq m_r\leq N} F_1(\bb{k};\bb{m};q;x) 
\ \text{ and }\ \sum_{1\leq n_1\leq\cdots\leq n_s\leq N} F_2(\bb{l};\bb{n};q;x), \]
connected by the factor $C(m_r,n_1,q,x)$ (and the relation $m_r\leq n_1$). 
We call it a connected sum with connector $C(m_r,n_1,q,x)$. 
In \cite{SY}, another type of connected sums is used 
to give a new proof of Ohno's relation for the multiple zeta values 
and Bradley's $q$-analog of it. 
\end{remark}

\begin{theorem}\label{step-thm}
For $(k_1, \dots, k_r) \in J_{\ast, r}$ with $k_1, \dots, k_{r-1} \geq 1$ 
and $(l_1, \dots, l_s) \in I_{\ast, s}$, we have
\begin{equation}\label{+1;1,}
Z_N^{\star}(k_1, \dots, k_r+1;l_1, \dots, l_s; q;x)
=Z_N^{\star}(k_1, \dots, k_r;1,l_1, \dots, l_s; q;x). 
\end{equation}
Moreover, if $s>0$, we also have 
\begin{equation}\label{+1,0;1+}
Z_N^{\star}(k_1, \dots, k_r+1,0;l_1, \dots, l_s; q;x)
=Z_N^{\star}(k_1, \dots, k_r;1+l_1, \dots, l_s; q;x).
\end{equation}
\end{theorem}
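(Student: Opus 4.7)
The plan is to reduce each identity to a closed-form lemma involving only the connector $C(m,n,q,x)$, by cancelling the factors of $F_1$ and $F_2$ that appear identically on both sides. Writing $v_j:=q^j/([j]_q-q^jx)$, a direct inspection of $F_1$ shows that raising $k_r$ to $k_r+1$ multiplies $F_1$ by the single factor $v_{m_r}$. On the right-hand side of \eqref{+1;1,}, prepending $l_0=1$ to $\bb{l}$ (and renaming the new leading variable $n_0$) inserts a factor $v_{n_0}$ into $F_2$ and replaces the connector $C(m_r,n_1,q,x)$ by $C(m_r,n_0,q,x)$. Fixing the other variables and summing out $n_0\in[m_r,n_1]$, the identity \eqref{+1;1,} therefore reduces to the lemma
\[
v_m\,C(m,n,q,x)=\sum_{j=m}^{n}v_j\,C(m,j,q,x)\qquad(m\le n),
\]
which I will prove by telescoping. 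A direct calculation using the elementary $q$-identity $[n]_q-[n-m]_q=q^{n-m}[m]_q$ gives $C(m,n,q,x)-C(m,n-1,q,x)=(v_n/v_m)\,C(m,n,q,x)$, so $v_m\bigl(C(m,n)-C(m,n-1)\bigr)=v_n\,C(m,n)$; summing from $n=m+1$ (with trivial base $n=m$) yields the lemma.

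For \eqref{+1,0;1+}, an analogous bookkeeping shows that setting $k_{r+1}=0$ and $k_r\mapsto k_r+1$ introduces into $F_1$ an extra factor $q^{m_r-m_{r+1}}/[m_r]_q$, while replacing $l_1$ by $l_1+1$ introduces $1/[n_1]_q$ on the right. Writing $m=m_r$, $n=n_1$ and summing out $m_{r+1}\in[m_r,n_1]$, the identity reduces to
\[
\sum_{j=m}^{n}q^{m-j}\,C(j,n,q,x)=\frac{[m]_q}{[n]_q}\,C(m,n,q,x)\qquad(m\le n).
\]
I will prove this by downward induction on $m$. Denoting the left-hand side by $U(m)$, the trivial relation $U(m)=C(m,n,q,x)+q^{-1}U(m+1)$ reduces the inductive step to the algebraic identity $[m+1]_q\,C(m+1,n,q,x)=-q^{m+1}[n-m]_q\,C(m,n,q,x)$ (which follows directly from the definition of $C$), combined with $[n]_q=[m]_q+q^m[n-m]_q$; the base case $m=n$ is a tautology.

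The main obstacle is not conceptual but purely algebraic: one must correctly isolate the two $C$-identities above through the bookkeeping of $F_1$, $F_2$ and the shifted summation ranges. Both halves of the theorem then follow from the same strategy, which rewrites the product of a boundary factor and a single connector as a sum of connectors with shifted indices. This is precisely the mechanism by which one unit of weight is transported between $\bb{k}$ and $\bb{l}$ inside the connected sum, matching the Ohno-type transport moves that will be iterated in the proof of Theorem \ref{MT1}.
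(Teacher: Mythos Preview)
Your proof is correct and follows essentially the same approach as the paper. Both arguments reduce \eqref{+1;1,} and \eqref{+1,0;1+} to the same two connector identities
\[
v_m\,C(m,n,q,x)=\sum_{j=m}^{n}v_j\,C(m,j,q,x),\qquad
\sum_{j=m}^{n}q^{m-j}\,C(j,n,q,x)=\frac{[m]_q}{[n]_q}\,C(m,n,q,x),
\]
and prove them by telescoping; your downward induction for the second lemma is just the paper's telescoping $\frac{[a]_q}{q^a[n]_q}C(a,n)-\frac{[a+1]_q}{q^{a+1}[n]_q}C(a+1,n)=q^{-a}C(a,n)$ rewritten, and your bookkeeping of the extra factors $v_{m_r}$, $v_{n_0}$, $q^{m_r-m_{r+1}}/[m_r]_q$, and $1/[n_1]_q$ matches the paper's.
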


\begin{proof}
The equality \eqref{+1;1,} follows from the telescoping sum
\begin{align*}
&\frac{q^m}{[m]_q-q^mx}\cdot C(m,n,q,x)\\
&=\sum_{a=m}^n\biggl(\frac{q^m}{[m]_q-q^mx}\cdot C(m,a,q,x)
-\frac{q^m}{[m]_q-q^mx}\cdot C(m,a-1,q,x)\biggr)\\
&=\sum_{a=m}^nC(m,a,q,x)\cdot\frac{q^a}{[a]_q-q^ax}
\end{align*}
applied to $m=m_r$, $n=n_2$ and $a=n_1$ 
in the definition of $Z_N^{\star}(k_1, \dots, k_r;1,l_1, \dots, l_s;q;x)$. 
Similarly, the equality \eqref{+1,0;1+} follows from the telescoping sum
\begin{align*}
&\frac{q^m}{[m]_q}\sum_{a=m}^nq^{-a}C(a,n,q,x)\\
&=\frac{q^m}{[m]_q}\sum_{a=m}^n\biggl(\frac{[a]_q}{q^a}\cdot C(a,n,q,x)\cdot\frac{1}{[n]_q}
-\frac{[a+1]_q}{q^{a+1}}\cdot C(a+1,n,q,x)\cdot\frac{1}{[n]_q}\biggr)\\
&=C(m,n,q,x)\cdot \frac{1}{[n]_q}
\end{align*}
applied to $m=m_r$, $n=n_1$, $a=m_{r+1}$ 
in the definition of $Z_N^{\star}(k_1, \dots, k_r+1,0;l_1, \dots, l_s;q;x)$.
\end{proof}

\begin{corollary}
Let $N$ be a positive integer and $\bb{k}=(k_1, \dots, k_r)$ an index. 
We define $P_N(\bb{k};q;x)$, $Q_N(\bb{k};q;x)$ and $R_N(q;x)$ by 
\begin{align*}
P_N(\bb{k};q;x)&:=\sum_{1\leq m_1\leq\cdots\leq m_r\leq N}
\frac{[m_1]_q}{[m_1]_q-q^{m_1}x}
\prod_{i=1}^r\frac{q^{(k_i-1)m_i}}{[m_i]_q([m_i]_q-q^{m_i}x)^{k_i-1}}
\cdot\frac{[m_r]_q}{[m_r]_q-q^{m_r}x}\\ 
&\hspace{7em}\cdot(-1)^{m_r-1}q^{\frac{m_r(m_r+1)}{2}}\binom{N}{m_r}_q,\\
Q_N(\bb{k};q;x)&:=\sum_{1\leq m_1\leq\cdots\leq m_r\leq N}
\prod_{i=1}^r\frac{q^{m_i}}{([m_i]_q-q^{m_i}x)[m_i]_q^{k_i-1}},\\
R_N(q;x)&:=\prod_{h=1}^N\biggl(1-\frac{q^hx}{[h]_q}\biggr)^{-1}.
\end{align*}
Then we have
\begin{equation}\label{P=QR}
P_N(\bb{k};q;x)=Q_N(\bb{k}^{\vee};q;x)R_N(q;x).
\end{equation}
\end{corollary}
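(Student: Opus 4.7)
The plan is to realize both sides of \eqref{P=QR} as boundary values of the connected sum $Z_N^\star$, and bridge them via the two transport relations of Theorem~\ref{step-thm}.

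For the \emph{left boundary}, a direct calculation: when $\bb{l} = \varnothing$, the only $n$-variable is $n_1 = N$, and the identity $\prod_{h=1}^N([h]_q - q^h x) = [N]_q!\,R_N(q;x)^{-1}$ together with the definition of $\binom{N}{m_r}_q$ immediately converts the connector into the summand of $P_N$, giving
\[
Z_N^\star(\bb{k};\varnothing;q;x) = R_N(q;x)^{-1}\,P_N(\bb{k};q;x).
\]

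For the \emph{right boundary}, I claim that $Z_N^\star((0);\bb{l};q;x) = Q_N(\bb{l};q;x)$ for any index $\bb{l}$. Since $F_2(\bb{l};\bb{n};q;x)$ does not involve $m_1$, this reduces to
\[
\sum_{m=1}^{n} F_1((0);(m);q;x)\,C(m,n,q,x) = 1\qquad(n\geq 1).
\]
After the factor $[m]_q - q^m x$ in the denominator of $F_1$ cancels the matching factor in $C$, the left side becomes a polynomial in $x$ of degree $\leq n-1$. I would prove it equals $1$ by Lagrange interpolation at the $n$ distinct points $x = [h_0]_q/q^{h_0}$ ($h_0 = 1,\ldots,n$): at each such point only the summand $m = h_0$ survives, and the remaining product is evaluated using $[h]_q - q^h[h_0]_q/q^{h_0} = [h-h_0]_q$ for $h > h_0$ and $-q^{h-h_0}[h_0-h]_q$ for $h < h_0$; after the $q$-factorials cancel the explicit coefficient, the value comes out to $1$ on the nose.

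The main step is then to bridge the two boundaries by iterating Theorem~\ref{step-thm}, namely, to show
\[
Z_N^\star(\bb{k};\varnothing;q;x) = Z_N^\star((0);\bb{k}^\vee;q;x).
\]
Writing $\bb{k} = (\{1\}^{a_1-1}, b_1+1, \ldots, \{1\}^{a_s-1}, b_s)$, I process the blocks from right to left. For each block $j$: apply \eqref{+1;1,} exactly $b_j$ times, peeling the last entry of the left part from $b_j$ down to $0$ and prepending $\{1\}^{b_j}$ to the right part; then apply \eqref{+1,0;1+} exactly $a_j - 1$ times, absorbing the preceding $\{1\}^{a_j-1}$ into the leading entry of the right part (raising it from $1$ to $a_j$). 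For $j > 1$, one further application of \eqref{+1,0;1+} executes the inter-block transition: it absorbs the trailing $0$, lowers the new trailing entry of the left part from $b_{j-1}+1$ to $b_{j-1}$, and raises the leading entry of the right part from $a_j$ to $a_j+1$. Reading off the final state reproduces $\bb{k}^\vee = (a_1,\{1\}^{b_1-1}, a_2+1,\{1\}^{b_2-1},\ldots,a_s+1,\{1\}^{b_s-1})$ exactly, while the left part collapses to $(0)$. Combining this with the two boundary identities yields \eqref{P=QR}.

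The main computational obstacle is the $q$-polynomial identity at the right boundary; the transport argument itself is a combinatorial translation of the Hoffman recipe one block at a time, and its correctness can be formalized by induction on $s$.
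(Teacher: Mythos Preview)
Your proposal is correct and follows essentially the same route as the paper: identify $Z_N^\star(\bb{k};\varnothing;q;x)=R_N(q;x)^{-1}P_N(\bb{k};q;x)$, iterate the two transport relations of Theorem~\ref{step-thm} (your block-by-block description spells out exactly what the paper summarizes as ``applying equalities $\wt(\bb{k})$ times''), and then identify $Z_N^\star((0);\bb{k}^\vee;q;x)=Q_N(\bb{k}^\vee;q;x)$. The only point where you add something is the right boundary: the paper simply invokes the partial fraction decomposition
\[
\sum_{m=1}^{n}\frac{[m]_q}{[m]_q-q^mx}\cdot\frac{(-1)^{m-1}q^{m(m-1)/2}}{[m]_q!\,[n-m]_q!}=\frac{1}{\prod_{h=1}^{n}([h]_q-q^hx)},
\]
whereas you prove the equivalent statement $\sum_m F_1((0);(m);q;x)\,C(m,n,q,x)=1$ by Lagrange interpolation at $x=[h_0]_q/q^{h_0}$---this is just the residue computation for the same partial fraction identity viewed from the other side, so the two arguments are essentially the same.
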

\begin{proof}
By applying equalities in Theorem \ref{step-thm} $\wt(\bb{k})$ times, 
we see that
\[
Z_N^{\star}(\bb{k};\varnothing;q;x)=\cdots =Z_N^{\star}(0;\bb{k}^{\vee};q;x)
\]
holds by the definition of the Hoffman dual. For example, 
\[
Z_N^{\star}(1,1,2;\varnothing)
\stackrel{\eqref{+1;1,}}{=}Z_N^{\star}(1,1,1;1)\stackrel{\eqref{+1;1,}}{=}Z_N^{\star}(1,1,0;1,1)
\stackrel{\eqref{+1,0;1+}}{=}Z_N^{\star}(1,0;2,1)
\stackrel{\eqref{+1,0;1+}}{=}Z_N^{\star}(0;3,1) 
\]
(here we abbreviated $Z_N^{\star}(\bb{k};\bb{l};q;x)$ as $Z_N^{\star}(\bb{k};\bb{l})$). 
By definition, we have
\begin{align*}
Z_N^{\star}(\bb{k};\varnothing;q;x)
&=\sum_{1\leq m_1\leq\cdots\leq m_r\leq N}F_1(\bb{k};\bb{m};q;x)C(m_r,N,q,x)\\
&=P_N(\bb{k};q;x)R_N(q;x)^{-1}
\end{align*}
and
\begin{align*}
Z_N^{\star}(0;\bb{k}^{\vee};q;x)
&=\sum_{1\leq m\leq n_1\leq\cdots\leq n_s\leq N}
\frac{q^{-m}[m]_q}{[m]_q-q^mx}C(m,n_1,q,x)F_2(\bb{k}^\vee;\bb{n};q;x)\\
&=Q_N(\bb{k}^{\vee};q;x).
\end{align*}
In the last equality, we have used the partial fraction decomposition
\[
\sum_{m=1}^{n_1}\frac{[m]_q}{[m]_q-q^mx}
\cdot\frac{(-1)^{m-1}q^{\frac{m(m-1)}{2}}}{[m]_q![n_1-m]_q!}
=\frac{1}{\prod_{h=1}^{n_1}([h]_q-q^hx)}.
\]
The proof is complete. 
\end{proof}

\begin{proof}[Proof of Theorem $\ref{MT1}$]
By using the expansion formula
\[
\frac{1}{([m]_q-q^mx)^k}
=\sum_{e=0}^{\infty}\binom{k+e-1}{e}\frac{q^{em}x^e}{[m]_q^{k+e}}
\]
for a positive integer $m$ and a non-negative integer $k$, we see that 
\[
P_N(\bb{k};q;x)=\sum_{e=0}^{\infty}\sum_{\bb{e} \in J_{e,r}}
b(\bb{k};\bb{e})H_N^{\star}(\bb{k}+\bb{e};q)x^e
\]
and
\[
Q_N(\bb{k}^{\vee};q;x)=\sum_{e=0}^{\infty}\sum_{\bb{e} \in J_{e,s}}
z_N^{\star}(\bb{k}^{\vee};\bb{e};q)x^e.
\]
Since $R_N(q;x)=\sum_{e=0}^{\infty}z_N^{\star}(\{1\}^{e};q)x^e$, we obtain 
the identity \eqref{q-Ohno} by comparing the coefficients of $x^e$ in \eqref{P=QR}.
\end{proof}

\section{Sum formulas for $\calA_2$-finite multiple zeta values}\label{sec:sum formula}
\subsection{Auxiliary facts}
We prepare some known facts for finite multiple zeta values. 

\begin{proposition}[{\cite[Theorem 6.1, 6.2]{H}, \cite[Theorem 3.1, 3.5]{Z1}}]\label{Aux-1}
Let $k_1, k_2$ and $k_3$ be positive integers, and assume that $l:=k_1+k_2+k_3$ is odd. 
Then 
\begin{align}
\label{A-2}
\zeta_{\calA}^{\star}(k_1,k_2)
&=(-1)^{k_2}\binom{k_1+k_2}{k_1}\frac{B_{\bb{p}-k_1-k_2}}{k_1+k_2}, \\
\label{A-3}
\zeta_{\calA}^{\star}(k_1,k_2,k_3)
&=\frac{1}{2}\biggl\{(-1)^{k_3}\binom{l}{k_3}-(-1)^{k_1}\binom{l}{k_1}\biggr\}
\frac{B_{\bb{p}-l}}{l}.
\end{align}
\end{proposition}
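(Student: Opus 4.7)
The plan is to follow the classical strategy of Hoffman \cite{H} and Zhao \cite{Z1}: reduce a depth-$r$ multiple harmonic sum modulo $p$ to known Bernoulli-number evaluations via the reflection $n \mapsto p - n$, partial fractions, and stuffle relations.

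For (\ref{A-2}), I would begin with the stuffle identity $\zeta_\calA^\star(k_1, k_2) = \zeta_\calA(k_1, k_2) + \zeta_\calA(k_1 + k_2)$ together with the vanishing $\zeta_\calA(k) = 0$ to reduce to the non-starred case. Substituting $b \mapsto p - b$ in $\sum_{1 \le a < b \le p-1} a^{-k_1} b^{-k_2}$ turns the sum modulo $p$ into $(-1)^{k_2} \sum_{a + b \le p - 1} a^{-k_1} b^{-k_2}$; the standard partial-fraction decomposition of $1/(a^{k_1}(n-a)^{k_2})$ with $n = a + b$ then expresses this as a linear combination of depth-two sums $\zeta_\calA(i, l - i)$ with $l = k_1 + k_2$. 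Combining such relations for all $(k_1, k_2)$ with fixed odd $l$ and using the antisymmetry $\zeta_\calA(i, l - i) = -\zeta_\calA(l - i, i)$ (a consequence of the stuffle relation and $\zeta_\calA(l) = 0$), one solves the resulting linear system to express each $\zeta_\calA(k_1, k_2)$ in terms of $\zeta_\calA(l - 1, 1)$. The latter is evaluated by expanding the stuffle relation $H_{p-1}^{(1)} H_{p-1}^{(l-1)} = \zeta_{p-1}(1, l-1) + \zeta_{p-1}(l-1, 1) + \zeta_{p-1}(l)$ modulo $p^2$, using Wolstenholme's $H_{p-1}^{(1)} \equiv 0 \pmod{p^2}$ and Glaisher's $\zeta_{p-1}(k) \equiv -\frac{k B_{p-k-1}}{k+1}\, p \pmod{p^2}$; this yields the Bernoulli number $B_{\bb{p}-l}/l$ with the correct sign, and the binomial $\binom{l}{k_1}$ emerges from the partial-fraction coefficients.

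For (\ref{A-3}) the same procedure iterates. After reflecting $b_3 \mapsto p - b_3$, two successive partial-fraction expansions reduce $\zeta_\calA(k_1, k_2, k_3)$ modulo $p$ to a combination of depth-two sums, each of which is evaluated by (\ref{A-2}); the parity hypothesis that $l$ is odd forces cancellation of all non-Bernoulli contributions. The coefficient $\frac{1}{2}\{(-1)^{k_3}\binom{l}{k_3} - (-1)^{k_1}\binom{l}{k_1}\}$ appearing in the answer reflects the reflection antisymmetry $\zeta_\calA^\star(k_1, k_2, k_3) = (-1)^l \zeta_\calA^\star(k_3, k_2, k_1) = -\zeta_\calA^\star(k_3, k_2, k_1)$: it is exactly the antisymmetric projection of $(-1)^{k_3}\binom{l}{k_3}$ under the index-reversal involution.

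The main obstacle is the combinatorial bookkeeping in the depth-three case, where one must verify that the intermediate depth-two contributions, after multiplication by the partial-fraction coefficients and summation, collapse precisely into the asserted antisymmetric difference of binomials via Vandermonde-type identities. This bookkeeping is the reason both identities are cited from \cite{H, Z1} rather than re-derived here.
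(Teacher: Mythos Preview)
The paper does not give its own proof of this proposition: it is stated under ``Auxiliary facts'' with citations to \cite{H} and \cite{Z1}, and no argument is supplied. Your proposal correctly identifies this (your last sentence says as much) and sketches the classical reflection/partial-fraction/stuffle argument from those references; the outline is accurate, so there is nothing to compare against a nonexistent in-paper proof.
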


\begin{proposition}[{\cite{ZC}, \cite[Theorem 3.2]{Z1}}]\label{Aux-2}
Let $k, r, k_1$ and $k_2$ be positive integers, and assume that $l:=k_1+k_2$ is even. 
Then 
\begin{align}
\label{A_2-{k}^r}
\zeta_{\calA_2}^{\star}(\{k\}^r )
&=k\frac{B_{\boldsymbol{p}-rk-1}}{rk+1}\bb{p},\\
\label{A_2-2}
\zeta_{\calA_2}^{\star}(k_1,k_2)
&=\frac{1}{2}\biggl\{(-1)^{k_1}k_2\binom{l+1}{k_1}-(-1)^{k_2}k_1\binom{l+1}{k_2}+l\biggr\}
\frac{B_{\bb{p}-l-1}}{l+1}\bb{p}.
\end{align}
\end{proposition}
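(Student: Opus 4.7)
The two formulas will be proved by separate techniques, although they share one ingredient: Glaisher's congruence $\zeta_{p-1}(m)\equiv \frac{m}{m+1}\,p\,B_{p-m-1}\pmod{p^2}$ for $1\le m\le p-3$ (which forces $\zeta_{p-1}(m)\equiv 0\pmod{p^2}$ whenever $m$ is odd, since $B_{p-m-1}$ then has odd index $\ge 3$), together with its consequence $\zeta_{\calA}(m)=0$ in $\calA$.

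For \eqref{A_2-{k}^r}, I would view $\zeta_{p-1}^{\star}(\{k\}^r)=h_r(1/1^k,\ldots,1/(p-1)^k)$ as a complete homogeneous symmetric polynomial in the variables $1/i^k$ and invoke Newton's identity $r\,h_r=\sum_{j=1}^{r}p_j\,h_{r-j}$, where $p_j:=\zeta_{p-1}(jk)$. A short induction on $r$ applied to this same identity shows that $\zeta_{p-1}^{\star}(\{k\}^s)$ is divisible by $p$ for every $s\ge 1$; combined with $p_j=O(p)$, this makes every cross term on the right $O(p^2)$. What survives modulo $p^2$ is the single relation $r\zeta_{p-1}^{\star}(\{k\}^r)\equiv \zeta_{p-1}(rk)$, and Glaisher's congruence applied to the right-hand side immediately yields \eqref{A_2-{k}^r}, both sides vanishing automatically when $rk$ is odd.

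For \eqref{A_2-2}, I would first split $\zeta_{p-1}^{\star}(k_1,k_2)=\zeta_{p-1}(l)+\zeta_{p-1}(k_1,k_2)$, reducing matters to identifying the strict depth-$2$ sum. To evaluate $\zeta_{\calA_2}(k_1,k_2)$ I combine two linear relations modulo $p^2$ between $\zeta_{p-1}(k_1,k_2)$ and its swap $\zeta_{p-1}(k_2,k_1)$: the stuffle identity $\zeta_{p-1}(k_1)\zeta_{p-1}(k_2)=\zeta_{p-1}(k_1,k_2)+\zeta_{p-1}(k_2,k_1)+\zeta_{p-1}(l)$, whose left-hand side is $O(p^2)$ by Wolstenholme; and the reflection identity obtained from the substitution $(a,b)\mapsto(p-b,p-a)$ after Taylor-expanding $1/(p-x)^k\equiv (-1)^k/x^k+(-1)^k k\,p/x^{k+1}\pmod{p^2}$ and invoking $(-1)^l=1$. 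The latter expresses $\zeta_{p-1}(k_1,k_2)-\zeta_{p-1}(k_2,k_1)$ modulo $p^2$ as $p$ times a combination of $\zeta_{p-1}(k_2,k_1+1)$ and $\zeta_{p-1}(k_2+1,k_1)$, both of odd weight $l+1$, whose residues in $\calA$ are furnished by \eqref{A-2}. Adding the two relations, halving, and then recombining $\zeta_{p-1}(l)$ via the star decomposition produces \eqref{A_2-2} after a short binomial simplification using $\binom{l+1}{k_2+1}=\binom{l+1}{k_1}$ and $(-1)^{k_1}=(-1)^{k_2}$.

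The hard part is purely bookkeeping in the depth-$2$ case: one must track the $(-1)^{k_i}$ signs produced by the reflection expansion, translate the Hoffman--Zhao values in \eqref{A-2} from $\zeta_{\calA}^{\star}$ to $\zeta_{\calA}$ (a trivial conversion, since the depth-one correction $\zeta_{\calA}(l+1)$ vanishes), and collapse the resulting three-term binomial combination into the clean form stated in \eqref{A_2-2}. The height-one identity is by contrast essentially routine once the inductive vanishing $\zeta_{\calA}^{\star}(\{k\}^s)\equiv 0\pmod{p}$ has been set up.
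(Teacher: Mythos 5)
Your proof is correct, but note that the paper does not prove Proposition \ref{Aux-2} at all: it is imported verbatim from Zhou--Cai \cite{ZC} (for \eqref{A_2-{k}^r}) and Zhao \cite[Theorem 3.2]{Z1} (for \eqref{A_2-2}), so what you have written is a self-contained reconstruction of those external results rather than an alternative to an argument in this paper. Your reconstruction is sound and is essentially the standard one. For \eqref{A_2-{k}^r}, the Newton--Girard identity $r\,h_r=\sum_{j=1}^{r}p_j h_{r-j}$ with $p_j=\zeta_{p-1}(jk)$ does give, by induction, $p\mid \zeta_{p-1}^{\star}(\{k\}^s)$ for all $s$, whence $r\,\zeta_{p-1}^{\star}(\{k\}^r)\equiv\zeta_{p-1}(rk)\pmod{p^2}$; combined with Glaisher's congruence this is exactly \eqref{A_2-{k}^r}. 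Be explicit that this argument excludes the finitely many primes with $p\le r$ or $p-1\mid jk$ for some $j\le r$ (harmless in $\calA_2$), and that the base input is precisely the $r=1$ case of the statement, i.e.\ Glaisher's classical congruence, which you are entitled to quote. For \eqref{A_2-2}, I checked your bookkeeping: the reflection $(a,b)\mapsto(p-b,p-a)$ with the expansion $(p-x)^{-k}\equiv(-1)^k(x^{-k}+kp\,x^{-k-1})\pmod{p^2}$ and $(-1)^l=1$ yields
\[
\zeta_{p-1}(k_1,k_2)-\zeta_{p-1}(k_2,k_1)\equiv p\bigl[k_1\zeta_{p-1}(k_2,k_1+1)+k_2\zeta_{p-1}(k_2+1,k_1)\bigr]\pmod{p^2},
\]
and feeding in \eqref{A-2} (with the trivial conversion $\zeta_{\calA}(a,b)=\zeta_{\calA}^{\star}(a,b)$ in odd weight, since $\zeta_\calA(a+b)=0$), the stuffle relation, and $\zeta_{p-1}(l)\equiv\frac{l}{l+1}pB_{p-l-1}$ reproduces the stated coefficient $\frac12\{(-1)^{k_1}k_2\binom{l+1}{k_1}-(-1)^{k_2}k_1\binom{l+1}{k_2}+l\}$ exactly. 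What your route buys is independence from the cited references at the cost of some length; what the paper's citation buys is brevity, since these congruences are genuinely classical inputs to its main argument rather than new content.
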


\begin{proposition}[{\cite[Corollary 3.16 (42)]{SS}}]
Let $n$ be a positive integer and $\bb{k}=(k_1, \dots, k_r)$ an index. Then 
\begin{equation}\label{non-star to star}
\sum_{j=0}^r (-1)^j\zeta_{\calA_n}(k_j, \dots, k_1)\,
\zeta_{\calA_n}^{\star}(k_{j+1}, \dots, k_r)=0.
\end{equation}
\end{proposition}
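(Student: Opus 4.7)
The plan is to upgrade the identity to one for finite multiple harmonic sums at an arbitrary level $N$ and then specialize. Concretely, I will prove
\[
\sum_{j=0}^r (-1)^j \zeta_N(k_j, \ldots, k_1)\,\zeta_N^{\star}(k_{j+1}, \ldots, k_r) = 0
\qquad (\ast)
\]
for every positive integer $N$, with the usual conventions $\zeta_N(\varnothing) = \zeta_N^{\star}(\varnothing) = 1$. Setting $N = p-1$, reducing each summand modulo $p^n$, and passing to the product over primes then yields the identity in $\calA_n$.

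For $(\ast)$ I would write each product as a single sum over tuples:
\[
\zeta_N(k_j, \ldots, k_1)\,\zeta_N^{\star}(k_{j+1}, \ldots, k_r)
= \sum_{\bb{m} \in S_j} \frac{1}{m_1^{k_1} \cdots m_r^{k_r}},
\]
where $S_j \subseteq \{1, \ldots, N\}^r$ is the set of tuples $\bb{m}=(m_1,\ldots,m_r)$ satisfying $m_1 > m_2 > \cdots > m_j$ and $m_{j+1} \leq m_{j+2} \leq \cdots \leq m_r$, with no constraint relating $m_j$ and $m_{j+1}$. Exchanging the two sums, $(\ast)$ reduces to the combinatorial assertion
\[
c(\bb{m}) := \sum_{j=0}^r (-1)^j \mathbf{1}[\bb{m}\in S_j] = 0
\qquad \text{for every } \bb{m} \in \{1,\ldots,N\}^r.
\]

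For each $\bb{m}$, let $J_{\max}(\bb{m})$ be the largest $j$ with $m_1 > \cdots > m_j$ and $J_{\min}(\bb{m})$ the smallest $j$ with $m_{j+1} \leq \cdots \leq m_r$; both are well-defined and satisfy $1 \leq J_{\max} \leq r$ and $0 \leq J_{\min} \leq r-1$. Then $\bb{m} \in S_j$ precisely when $J_{\min}(\bb{m}) \leq j \leq J_{\max}(\bb{m})$. The crux of the argument is the structural dichotomy: either $J_{\min}(\bb{m}) > J_{\max}(\bb{m})$, or $J_{\max}(\bb{m}) = J_{\min}(\bb{m}) + 1$. Indeed, if $J_{\max} \geq J_{\min} + 2$, then any index $i$ with $J_{\min} + 1 \leq i \leq J_{\max} - 1$ lies in the strictly decreasing prefix and in the non-decreasing suffix simultaneously, forcing both $m_i > m_{i+1}$ and $m_i \leq m_{i+1}$, which is absurd. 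The equality $J_{\min}=J_{\max}$ is ruled out by a parallel unpacking of the defining minimality and maximality. In both remaining cases $c(\bb{m}) = 0$: either no $j$ is valid, or the two consecutive valid $j$'s contribute $(-1)^{J_{\min}} + (-1)^{J_{\min}+1} = 0$.

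The main obstacle is precisely this combinatorial dichotomy. A priori the valid $j$'s for a given $\bb{m}$ could form any subinterval of $\{0, 1, \ldots, r\}$, so term-by-term cancellation is not automatic; the content of the lemma is that every such interval has length at most one, whereupon the alternating sum collapses.
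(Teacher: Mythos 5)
Your proof is correct. Note that the paper itself gives no proof of this proposition: it is imported verbatim from \cite[Corollary 3.16 (42)]{SS}, so there is no in-paper argument to compare against. Your strategy is the natural one and is essentially the standard proof of this ``antipode'' identity: lift to an exact identity for the truncated sums $\zeta_N$, $\zeta_N^{\star}$ at every level $N$ (which immediately descends to $\calA_n$ by taking $N=p-1$ and reducing mod $p^n$ componentwise), expand each product $\zeta_N(k_j,\dots,k_1)\,\zeta_N^{\star}(k_{j+1},\dots,k_r)$ as a sum over tuples with a strictly decreasing prefix and a non-decreasing suffix, and show that for each fixed tuple $\bb{m}$ the set of admissible $j$ is an interval of length $0$ or $2$ (in your notation, either $J_{\min}>J_{\max}$ or $J_{\max}=J_{\min}+1$), so the alternating sum over $j$ vanishes. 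The dichotomy argument is sound, including the edge cases: $J_{\max}=J_{\min}$ is impossible for $0<j<r$ since it would force $m_j\le m_{j+1}$ and $m_j>m_{j+1}$ simultaneously, and the boundary values $j=0$ and $j=r$ are excluded by the a priori bounds $J_{\max}\ge 1$ and $J_{\min}\le r-1$. The only caveat, which is cosmetic, is that the statement requires $r\ge 1$ (for $\bb{k}=\varnothing$ the sum equals $1$); this is consistent with the paper's convention that an index is a tuple of positive integers.
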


\subsection{Computations of sums for $\mathcal{A}_2$-finite multiple zeta values}
\begin{definition}
Let $k, r$ and $i$ be positive integers satisfying $i\leq r\leq k$. 
We define four sums $S_{k,r}$, $S_{k,r}^{\star}$, 
$S_{k,r,i}$ and $S_{k,r,i}^{\star}$ in $\calA_2$ by
\begin{alignat*}{2}
S_{k,r}&:=\sum_{\bb{k} \in I_{k,r}}\zeta_{\calA_2}(\bb{k}), &\qquad 
S_{k,r}^{\star}&:=\sum_{\bb{k} \in I_{k,r}}\zeta_{\calA_2}^{\star}(\bb{k}),\\
S_{k,r,i}&:=\sum_{\bb{k} \in I_{k,r,i}}\zeta_{\calA_2}(\bb{k}), &\qquad 
S_{k,r,i}^{\star}&:=\sum_{\bb{k} \in I_{k,r,i}}\zeta_{\calA_2}^{\star}(\bb{k}).
\end{alignat*}
\end{definition}

For an index $\bb{k}=(k_1, \dots, k_r)$, we set $\bb{k}^+:=(k_1, \dots, k_{r-1}, k_r+1)$.
We can calculate $S_{k,r}^{\star}$ and $S_{k,r,i}^{\star}$ by using the following identity. 

\begin{corollary}
Let $e$ be a non-negative integer, $\bb{k} \in I_{\ast,r}$ an index 
and $s:=\dep(\bb{k}^{\vee})$. Then we have
\begin{equation}\label{A_2-Ohno}
\begin{split}
&\sum_{j=0}^e\zeta_{\calA_2}^{\star}(\{1\}^{e-j})
\sum_{\bb{e}\in J_{j,r}}\zeta_{\calA_2}^{\star}(\bb{k}+\bb{e})\\
&=\sum_{\bb{e}'\in J_{e,s}}b(\bb{k}^{\vee};\bb{e}')
\Bigl\{-\zeta_{\calA_2}^{\star}(\bb{k}^{\vee}+\bb{e}')
-\zeta_{\calA_2}^{\star}(\bb{k}^{\vee}+\bb{e}', 1)\bb{p}
+\zeta_{\calA_2}^{\star}((\bb{k}^{\vee}+\bb{e}')^+)\bb{p}\Bigr\}. 
\end{split}
\end{equation}
\end{corollary}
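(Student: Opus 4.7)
The plan is to derive \eqref{A_2-Ohno} by reducing Corollary \ref{cor:Ohno} modulo $p^2$. Since Hoffman duality is an involution, I apply \eqref{Ohno} with $\bb{k}$ replaced by $\bb{k}^{\vee}$ (of depth $s$, whose Hoffman dual is $\bb{k}$ of depth $r$) and take $N=p-1$, obtaining
\[
\sum_{\bb{e}'\in J_{e,s}} b(\bb{k}^{\vee};\bb{e}')\,H_{p-1}^{\star}(\bb{k}^{\vee}+\bb{e}')
=\sum_{j=0}^{e}\zeta_{p-1}^{\star}(\{1\}^{e-j})\sum_{\bb{e}\in J_{j,r}}\zeta_{p-1}^{\star}(\bb{k}+\bb{e}).
\]
Reducing this equality mod $p^2$ turns the right-hand side into exactly the left-hand side of \eqref{A_2-Ohno}, so the whole task reduces to expanding $H_{p-1}^{\star}(\bb{l})\bmod p^2$ for an arbitrary index $\bb{l}$ and checking that its image in $\calA_2$ equals $-\zeta_{\calA_2}^{\star}(\bb{l})-\zeta_{\calA_2}^{\star}(\bb{l},1)\bb{p}+\zeta_{\calA_2}^{\star}(\bb{l}^+)\bb{p}$.

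The key auxiliary lemma I would establish is therefore
\[
H_{p-1}^{\star}(\bb{l})\equiv -\zeta_{p-1}^{\star}(\bb{l})-p\,\zeta_{p-1}^{\star}(\bb{l},1)+p\,\zeta_{p-1}^{\star}(\bb{l}^+)\pmod{p^2}.
\]
The starting point is the elementary expansion
\[
(-1)^{m-1}\binom{p-1}{m}=-\prod_{j=1}^{m}\Bigl(1-\frac{p}{j}\Bigr)\equiv -1+p\,H_m\pmod{p^2},
\]
where $H_m:=\sum_{j=1}^{m}1/j$. Substituting this into the definition of $H_{p-1}^{\star}(\bb{l})$ splits it as $-\zeta_{p-1}^{\star}(\bb{l})+p\,T$, where $T=\sum_{m_1\leq\cdots\leq m_r\leq p-1}H_{m_r}/(m_1^{l_1}\cdots m_r^{l_r})$. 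Writing $H_{m_r}=H_{p-1}-\sum_{j=m_r+1}^{p-1}1/j$ and decomposing the inequality $m_r\leq j$ as $\{m_r<j\}\sqcup\{m_r=j\}$ then gives
\[
T=H_{p-1}\zeta_{p-1}^{\star}(\bb{l})+\zeta_{p-1}^{\star}(\bb{l}^+)-\zeta_{p-1}^{\star}(\bb{l},1).
\]
Since $\zeta_{\calA}^{\star}(1)=0$ amounts to $H_{p-1}\equiv 0\pmod{p}$, the term $p\,H_{p-1}\zeta_{p-1}^{\star}(\bb{l})$ vanishes mod $p^2$, yielding the lemma.

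Substituting the lemma with $\bb{l}=\bb{k}^{\vee}+\bb{e}'$ back into the left-hand side of the mod-$p^2$ Ohno identity produces precisely the right-hand side of \eqref{A_2-Ohno}, completing the proof. The single non-routine ingredient is the decomposition of the double sum $T$ into the $\zeta^{\star}(\bb{l},1)$ and $\zeta^{\star}(\bb{l}^+)$ parts; once this bookkeeping is carried out and the Wolstenholme-type vanishing $H_{p-1}\equiv 0\pmod p$ is used to kill the stray $p\,H_{p-1}$ contribution, everything else is a direct $p$-adic reduction of identities already proved earlier in the paper.
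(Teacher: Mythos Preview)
Your proof is correct and follows essentially the same route as the paper: apply \eqref{Ohno} with $\bb{k}$ replaced by $\bb{k}^{\vee}$ at $N=p-1$, then expand $(-1)^{m-1}\binom{p-1}{m}$ modulo $p^2$ to turn $H_{p-1}^{\star}(\bb{l})$ into $-\zeta_{p-1}^{\star}(\bb{l})-p\,\zeta_{p-1}^{\star}(\bb{l},1)+p\,\zeta_{p-1}^{\star}(\bb{l}^{+})$. The only cosmetic difference is that the paper quotes the binomial congruence directly in the form $(-1)^{m-1}\binom{p-1}{m}\equiv -1-\sum_{m\le n\le p-1}\frac{p}{n}+\frac{p}{m}\pmod{p^2}$, which yields the three $\zeta^{\star}$-terms immediately, whereas you use the equivalent form $-1+pH_m$ and then invoke $H_{p-1}\equiv 0\pmod p$ to dispose of the extra $pH_{p-1}\zeta_{p-1}^{\star}(\bb{l})$ contribution.
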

\begin{proof}
Since a congruence
\[
(-1)^{m-1}\binom{p-1}{m}
\equiv -1-\sum_{m\leq n\leq p-1}\frac{p}{n}+\frac{p}{m} \pmod{p^2}
\]
holds for any odd prime $p$ and any positive integer $m$ with $m<p$ 
(cf. \cite[Lemma 4.1]{S}), this corollary is a direct consequence of \eqref{Ohno}.
\end{proof}

\begin{proposition}\label{S_A_2,k,r}
For positive integers $k$ and $r$ such that $r\leq k$, we have 
\[
(-1)^{r-1}S_{k,r}=S_{k,r}^{\star}=\binom{k}{r}\frac{B_{\bb{p}-k-1}}{k+1}\bb{p}.
\]
\end{proposition}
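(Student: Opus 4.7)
The plan is to apply the $\calA_2$-Ohno identity \eqref{A_2-Ohno} to $\bb{k}=\{1\}^r$, whose Hoffman dual is $(r)$ (so $s=1$), with $e=k-r$. The inner sum on the LHS is
\[
\sum_{\bb{e}\in J_{j,r}}\zeta_{\calA_2}^{\star}(\{1\}^r+\bb{e})=S_{r+j,r}^{\star},
\]
the outer index set $J_{k-r,1}=\{(k-r)\}$ on the RHS is a singleton with $b((r);(k-r))=\binom{k}{r}$, and $(\bb{k}^{\vee}+\bb{e}')^+=(k+1)$. Therefore \eqref{A_2-Ohno} specializes to
\[
\sum_{j=0}^{k-r}\zeta_{\calA_2}^{\star}(\{1\}^{k-r-j})\,S_{r+j,r}^{\star}
=\binom{k}{r}\bigl\{-\zeta_{\calA_2}^{\star}(k)-\zeta_{\calA_2}^{\star}(k,1)\,\bb{p}+\zeta_{\calA_2}^{\star}(k+1)\,\bb{p}\bigr\}.
\]

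The next step exploits $\bb{p}^2=0$ in $\calA_2$. By \eqref{A_2-{k}^r} with first argument $1$, each $\zeta_{\calA_2}^{\star}(\{1\}^i)$ for $i\geq 1$ lies in $\bb{p}\calA_2$; by \eqref{eq:k,r sum A} the same holds for $S_{r+j,r}^{\star}$. Thus every LHS term with $j<k-r$ is a $\bb{p}^2$-multiple, hence zero, and the LHS collapses to $S_{k,r}^{\star}$. On the RHS, $\zeta_{\calA_2}^{\star}(k+1)\bb{p}=0$ for the same reason, while $\zeta_{\calA_2}^{\star}(k,1)\bb{p}$ depends only on the $\calA$-reduction $\zeta_{\calA}^{\star}(k,1)$. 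Substituting $\zeta_{\calA_2}^{\star}(k)=kB_{\bb{p}-k-1}\bb{p}/(k+1)$ from \eqref{A_2-{k}^r} and $\zeta_{\calA}^{\star}(k,1)=-B_{\bb{p}-k-1}$ from \eqref{A-2}, the cancellation $1-k/(k+1)=1/(k+1)$ gives
\[
S_{k,r}^{\star}=\binom{k}{r}\frac{B_{\bb{p}-k-1}}{k+1}\,\bb{p}.
\]

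To deduce $(-1)^{r-1}S_{k,r}=S_{k,r}^{\star}$, I sum \eqref{non-star to star} with $n=2$ over $\bb{k}\in I_{k,r}$. Since reversal is a bijection on $I_{a,j}$, the $j$-th term becomes $(-1)^j\sum_{a+b=k}S_{a,j}\,S_{b,r-j}^{\star}$ (subject to $a\geq j$, $b\geq r-j$). For $1\leq j\leq r-1$ both factors are $\bb{p}$-multiples by \eqref{eq:k,r sum A}, so these cross terms vanish in $\calA_2$; only the $j=0$ contribution $S_{k,r}^{\star}$ and the $j=r$ contribution $(-1)^r S_{k,r}$ survive, yielding $S_{k,r}^{\star}+(-1)^r S_{k,r}=0$.

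The main obstacle is recognizing these two parallel collapses: both the specialized Ohno identity of step~1 and the summed relation of step~3 degenerate dramatically in $\calA_2$, precisely because the depth-$r$ sums known to vanish in $\calA$ by \eqref{eq:k,r sum A} are thereby promoted to $\bb{p}$-multiples that kill any further factor of $\bb{p}$. Once those collapses are in place, what remains is the explicit Bernoulli-number identity that uses only the depth-$1$ formula of \eqref{A_2-{k}^r} and the depth-$2$ formula \eqref{A-2}.
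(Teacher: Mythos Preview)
Your proof is correct and follows essentially the same route as the paper: specialize \eqref{A_2-Ohno} to $\bb{k}=(\{1\}^r)$, $e=k-r$, use \eqref{A_2-{k}^r} and \eqref{eq:k,r sum A} to collapse the left-hand side to $S_{k,r}^\star$ and evaluate the right-hand side via \eqref{A_2-{k}^r} and \eqref{A-2}, then sum \eqref{non-star to star} over $I_{k,r}$ and kill the cross terms by \eqref{eq:k,r sum A}. The only cosmetic difference is that you note $\zeta_{\calA_2}^{\star}(k+1)\bb{p}=0$ explicitly, whereas the paper silently drops it in the final evaluation.
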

\begin{proof}
Let $\bb{k}=(\{1\}^r)$ and $e=k-r$ in \eqref{A_2-Ohno}. 
Then $\bb{k}^{\vee}=(r)$ and we have
\begin{equation}\label{A_2-app1}
\sum_{j=0}^{k-r}\zeta_{\calA_2}^{\star}(\{1\}^{k-r-j})S_{j+r,r}^{\star}
=\binom{k}{r}\Bigl\{-\zeta_{\calA_2}(k)-\zeta_{\calA_2}^{\star}(k,1)\bb{p}
+\zeta_{\calA_2}(k+1)\bb{p}\Bigr\}.
\end{equation}
For $0\leq j<k-r$, $\zeta_{\calA_2}^{\star}(\{1\}^{k-r-j})S_{j+r,r}^{\star}=0$ 
since both $\zeta_{\calA_2}^{\star}(\{1\}^{k-r-j})$ and $S_{j+r,r}^{\star}$ 
are divisible by $\bb{p}$ by \eqref{A_2-{k}^r} and \eqref{eq:k,r sum A}. 
Therefore, the left hand side of \eqref{A_2-app1} is equal to $S_{k,r}^{\star}$. 
On the other hand, the right hand side of \eqref{A_2-app1} is equal to
\[
\binom{k}{r}\biggl\{-k\frac{B_{\bb{p}-k-1}}{k+1}\bb{p}
+\binom{k+1}{k}\frac{B_{\bb{p}-k-1}}{k+1}\bb{p}\biggr\}
=\binom{k}{r}\frac{B_{\bb{p}-k-1}}{k+1}\bb{p}
\]
by \eqref{A_2-{k}^r} and \eqref{A-2}. 
Hence we have $S_{k,r}^{\star}=\binom{k}{r}\frac{B_{\bb{p}-k-1}}{k+1}\bb{p}$. 

By taking $\sum_{\bb{k} \in I_{k,r}}$ of \eqref{non-star to star}, we obtain
\[
S_{k,r}^{\star}+\sum_{j=1}^{r-1}(-1)^j\sum_{l=j}^{k-r+j}S_{l,j}S_{k-l,r-j}^{\star}+(-1)^rS_{k,r}=0.
\]
We see that $S_{l,j}S_{k-l,r-j}^{\star}=0$ for $1\leq j\leq r-1$ and $j \leq l \leq k-r+j$, 
since both $S_{l,j}$ and $S_{k-l,r-j}^{\star}$ are divisible by $\bb{p}$ 
by \eqref{eq:k,r sum A}. This gives $(-1)^{r-1}S_{k,r}=S_{k,r}^{\star}$. 
\end{proof}

Next we compute $S_{k,r,i}^{\star}$ and $S_{k,r,i}$. 



\begin{theorem}\label{thm:k,r,i sum A_2}
Let $k, r$ and $i$ be positive integers satisfying $i\leq r<k$, and 
assume that $k$ is even. Then we have
\[
S_{k,r,i}=(-1)^{r-1}\frac{a_{k,r,i}}{2}\cdot\frac{B_{\bb{p}-k-1}}{k+1}\bb{p}, \quad 
S_{k,r,i}^{\star}=\frac{b_{k,r,i}}{2}\cdot\frac{B_{\bb{p}-k-1}}{k+1}\bb{p},
\]
where
\begin{align*}
a_{k,r,i}&=\binom{k-1}{r}
+(-1)^{r-i}\biggl\{(k-r)\binom{k}{i-1}+\binom{k-1}{i-1}+(-1)^{r-1}\binom{k-1}{r-i}\biggr\},\\
b_{k,r,i}&=\binom{k-1}{r}
+(-1)^{i-1}\biggl\{(k-r)\binom{k}{r-i}+\binom{k-1}{r-i}+(-1)^{r-1}\binom{k-1}{i-1}\biggr\}.
\end{align*}
\end{theorem}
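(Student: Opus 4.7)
The plan is to apply \eqref{A_2-Ohno} with $\bb{k} = (\{1\}^{i-1}, 2, \{1\}^{r-i})$ and $e = k-r-1$. The translates $\bb{k}+\bb{e}$ for $\bb{e}\in J_{e,r}$ then range bijectively over $I_{k,r,i}$, and a direct inspection of the canonical block form of $\bb{k}$ shows that $\bb{k}^\vee = (i, r-i+1)$, of depth $s = 2$, whether $r > i$ or $r = i$.

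First I would verify that the left-hand side of \eqref{A_2-Ohno} collapses to $S_{k,r,i}^\star$. The inner sum $\sum_{\bb{e}'\in J_{j,r}}\zeta_{\calA_2}^\star(\bb{k}+\bb{e}')$ equals $S_{r+1+j,r,i}^\star$, so the $j = e$ summand contributes $S_{k,r,i}^\star$. For $j < e$: if $r+1+j$ is even, then $S_{r+1+j,r,i}^\star \in \bb{p}\calA_2$ by \eqref{eq:SW even} while $\zeta_{\calA_2}^\star(\{1\}^{e-j}) \in \bb{p}\calA_2$ by \eqref{A_2-{k}^r}, so the product vanishes; if $r+1+j$ is odd, then $e-j = k-(r+1+j)$ is odd, and $\zeta_{\calA_2}^\star(\{1\}^{e-j}) = 0$ outright because the Bernoulli number $B_{\bb{p}-(e-j)-1}$ has an odd index $\geq 3$. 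Hence the LHS reduces to $S_{k,r,i}^\star$.

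Next, parameterize the RHS summand by $(a, b) = \bb{k}^\vee + \bb{e}'$, so $a+b = k$, $a \geq i$, $b \geq r-i+1$, and $b(\bb{k}^\vee; \bb{e}') = \binom{a-1}{i-1}\binom{b-1}{r-i}$. The three inner terms are evaluated using \eqref{A_2-2} for $\zeta_{\calA_2}^\star(a,b)$ (depth two, even weight $k$), \eqref{A-3} for $\zeta_{\calA_2}^\star(a, b, 1)\bb{p}$ (the $\bb{p}$-factor forces reduction to $\calA$), and \eqref{A-2} for $\zeta_{\calA_2}^\star(a, b+1)\bb{p}$. All three share the factor $\frac{B_{\bb{p}-k-1}}{k+1}\bb{p}$. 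Using $(-1)^a = (-1)^b$ (since $a+b = k$ is even) and the absorption identity $(b+1)\binom{k+1}{a} = (a+1)\binom{k+1}{a+1}$, the three contributions telescope to $\tfrac12 - \tfrac{(-1)^a}{2}\binom{k+1}{a+1}$; coupled with the Vandermonde evaluation $\sum_a\binom{a-1}{i-1}\binom{k-a-1}{r-i} = \binom{k-1}{r}$, the desired equality reduces to the alternating binomial identity
\[
\sum_{a}(-1)^{a}\binom{a-1}{i-1}\binom{k-a-1}{r-i}\binom{k+1}{a+1}
= (-1)^{i}\Bigl\{(k-r)\binom{k}{r-i}+\binom{k-1}{r-i}+(-1)^{r-1}\binom{k-1}{i-1}\Bigr\}.
\]
Establishing this identity is the main obstacle of the proof; the natural approach is to apply Pascal's rule $\binom{k+1}{a+1} = \binom{k}{a}+\binom{k}{a+1}$ to split the sum into two alternating triple-binomial sums, each evaluable by repeated Vandermonde convolution (or by Zeilberger's algorithm).

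Finally, the formula for $S_{k,r,i}$ follows from that for $S_{k,r,r-i+1}^\star$ via \eqref{non-star to star}. Summing \eqref{non-star to star} over $\bb{k}\in I_{k,r,i}$, the middle cross-terms for $1 \leq j \leq r-1$ are products of the form $S_{w_1,j,\cdot}\cdot S_{w_2,r-j,\cdot}^\star$ (with the $i$-constraint on one factor or the other depending on the value of $j$); each such term vanishes in $\calA_2$ because, since $w_1+w_2 = k$ is even, either both weights are even (in which case both factors lie in $\bb{p}\calA_2$ by \eqref{eq:SW even} or Proposition \ref{S_A_2,k,r}, and the product is in $\bb{p}^2\calA_2 = 0$) or both weights are odd (in which case the factor with no $i$-constraint vanishes, being a multiple of an odd-index Bernoulli number). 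What remains is $S_{k,r,i}^\star + (-1)^r S_{k,r,r-i+1} = 0$, that is, $S_{k,r,i} = (-1)^{r-1} S_{k,r,r-i+1}^\star$; a direct substitution, together with the elementary verification $b_{k,r,r-i+1} = a_{k,r,i}$, yields the stated formula.
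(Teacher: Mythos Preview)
Your proposal is correct and follows essentially the same route as the paper: apply \eqref{A_2-Ohno} with $\bb{k}=(\{1\}^{i-1},2,\{1\}^{r-i})$ and $e=k-r-1$, collapse the left side to $S_{k,r,i}^\star$ by parity of Bernoulli indices, evaluate the right side via \eqref{A_2-2}, \eqref{A-3}, \eqref{A-2} to reach the expression $\tfrac12\bigl[1+(-1)^{a-1}\binom{k+1}{a+1}\bigr]$, and then recover $S_{k,r,i}$ from $S_{k,r,r+1-i}^\star$ via \eqref{non-star to star}. The one place where you diverge is the evaluation of the alternating triple-binomial sum: you gesture at Pascal's rule plus repeated Vandermonde (or Zeilberger), whereas the paper uses a slicker device---the partial-fraction expansion
\[
F(x)=\sum_{e=0}^{k-r-1}\frac{(-1)^e}{e!\,(k-r-1-e)!}\cdot\frac{1}{x+e}=\frac{1}{x(x+1)\cdots(x+k-r-1)},
\]
after which the sum becomes a rational combination of $F(i)$, $F(i+1)$, and $F(r-i+1)$ and evaluates in one line. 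Your approach would certainly succeed (the identity is hypergeometric and Zeilberger disposes of it mechanically), but it leaves the crux of the computation as an exercise; the paper's partial-fraction method is explicit, self-contained, and worth knowing.
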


\begin{proof}
Let $\bb{k}=(\{1\}^{i-1},2,\{1\}^{r-i})$ and $e=k-r-1$ in \eqref{A_2-Ohno}. 
Then $\bb{k}^{\vee}=(i,r-i+1)$ and we have
\begin{equation}\label{A_2-app2}
\begin{split}
&\sum_{j=0}^{k-r-1}\zeta_{\calA_2}^{\star}(\{1\}^{k-r-1-j})S_{j+r+1,r,i}^{\star}\\
&=\sum_{e=0}^{k-r-1}
\binom{i+e-1}{e}\binom{k-i-e-1}{k-r-1-e}
\Bigl\{-\zeta_{\calA_2}^{\star}(i+e,k-i-e)\\
&\hspace{5em}-\zeta_{\calA_2}^{\star}(i+e,k-i-e,1)\bb{p}
+\zeta_{\calA_2}^{\star}(i+e,k-i-e+1)\bb{p}\Bigr\}.
\end{split}
\end{equation}
For $0\leq j<k-r-1$, we see that 
$\zeta_{\calA_2}^{\star}(\{1\}^{k-r-1-j})S_{j+r+1,r,i}^{\star}$ is a rational multiple 
of $B_{\bb{p}-k+r+j}B_{\bb{p}-j-r-1}\bb{p}$ by \eqref{A_2-{k}^r} and Theorem \ref{SW-thm}. 
Since $k$ is even, one of $B_{\bb{p}-k+r+j}$ or $B_{\bb{p}-j-r-1}$ is zero. 
Therefore, the left hand side of \eqref{A_2-app2} is equal to $S_{k,r,i}^{\star}$. 

On the other hand, we can calculate the right hand side of \eqref{A_2-app2} as follows. 
By \eqref{A_2-2}, \eqref{A-2} and \eqref{A-3}, we have 
\begin{align*}
&-\zeta_{\calA_2}^{\star}(i+e,k-i-e)
-\zeta_{\calA_2}^{\star}(i+e,k-i-e,1)\bb{p}
+\zeta_{\calA_2}^{\star}(i+e,k-i-e+1)\bb{p}\\
&=\Biggl[-\frac{1}{2}\biggl\{(-1)^{i+e}(k-i-e)\binom{k+1}{i+e}
-(-1)^{k-i-e}(i+e)\binom{k+1}{k-i-e}+k\biggr\}\\
&\qquad -\frac{1}{2}\biggl\{-(k+1)-(-1)^{i+e}\binom{k+1}{i+e}\biggr\}
+(-1)^{k-i-e+1}\binom{k+1}{i+e}\Biggr]\frac{B_{\bb{p}-k-1}}{k+1}\bb{p}\\
&=\frac{1}{2}\Biggl[1-(-1)^{i+e}(k-i-e+1)\binom{k+1}{i+e}
+(-1)^{i+e}(i+e)\binom{k+1}{k-i-e}\Biggr]\frac{B_{\bb{p}-k-1}}{k+1}\bb{p}\\
&=\frac{1}{2}\Biggl[1+(-1)^{i-1+e}\binom{k+1}{i+e+1}\Biggr]\frac{B_{\bb{p}-k-1}}{k+1}\bb{p}. 
\end{align*}
Therefore, the right hand side of \eqref{A_2-app2} is equal to 
\begin{equation}\label{A_2-int}
\frac{1}{2}\sum_{e=0}^{k-r-1}
\binom{i+e-1}{e}\binom{k-i-e-1}{k-r-1-e}
\Biggl[1+(-1)^{i-1+e}\binom{k+1}{i+e+1}\Biggr]\frac{B_{\bb{p}-k-1}}{k+1}\bb{p}.
\end{equation}
By comparing the coefficient of $x^{k-r-1}$ in $(1-x)^{-i}(1-x)^{-(r-i+1)}=(1-x)^{-(r+1)}$, we see that
\[
\sum_{e=0}^{k-r-1}\binom{i+e-1}{e}\binom{k-i-e-1}{k-r-1-e}=\binom{k-1}{r},
\]
and by using the partial fraction decomposition 
\[
F(x):=\sum_{e=0}^{k-r-1}\frac{(-1)^e}{e!(k-r-1-e)!}\cdot\frac{1}{x+e}=\frac{1}{x(x+1)\cdots (x+k-r-1)},
\]
we see that
\begin{align*}
&\sum_{e=0}^{k-r-1}\binom{i+e-1}{e}\binom{k-i-e-1}{k-r-1-e}\cdot(-1)^{i-1+e}\binom{k+1}{i+e+1} \\
&=(-1)^{i-1}\frac{(k+1)!}{(i-1)!(r-i)!}\sum_{e=0}^{k-r-1}\frac{(-1)^e}{e!(k-r-1-e)!(i+e)(i+e+1)(k-i-e)}\\
&=(-1)^{i-1}\frac{(k+1)!}{(i-1)!(r-i)!}\left\{\frac{1}{k}F(i)-\frac{1}{k+1}F(i+1)+\frac{(-1)^{r-1}}{k(k+1)}F(r-i+1)\right\}\\
&=(-1)^{i-1}\biggl\{(k-r)\binom{k}{r-i}+\binom{k-1}{r-i}+(-1)^{r-1}\binom{k-1}{i-1}\biggr\}.
\end{align*}
Thus we have proved $S_{k,r,i}^{\star}=\frac{b_{k,r,i}}{2}\cdot\frac{B_{\bb{p}-k-1}}{k+1}\bb{p}$. 

Let us take the sum $\sum_{\bb{k} \in I_{k,r,r+1-i}}$ of \eqref{non-star to star}. 
Then we obtain
\begin{multline}\label{A_2-SS*}
S_{k,r,r+1-i}^{\star}
+\sum_{j=1}^{r-i}(-1)^j\sum_{l=j}^{k-r+j-1}S_{l,j}S_{k-l,r-j,r+1-i-j}^{\star}\\ 
+\sum_{j=r-i+1}^{r-1}(-1)^j\sum_{l=j+1}^{k-r+j}S_{l,j,j+i-r}S_{k-l,r-j}^{\star}
+(-1)^rS_{k,r,i}=0.
\end{multline}
We know that 
$S_{l,j}S_{k-l,r-j,r+1-i-j}^\star$ is a rational multiple of 
$B_{\bb{p}-l-1}B_{\bb{p}-k+l}\bb{p}$ for $1\leq j\leq r-i$ and
we also know that 
$S_{l,j,j+i-r}S_{k-l,r-j}^{\star}$ is a rational multiple of 
$B_{\bb{p}-l}B_{\bb{p}-k+l-1}\bb{p}$ for $r-i+1\leq j\leq r-1$ 
by Theorem \ref{SW-thm} and Proposition \ref{S_A_2,k,r}. 
Since $k$ is even, these are zero for every $l$. 
Therefore, we have
\[
S_{k,r,i}=(-1)^{r-1}\frac{b_{k,r,r+1-i}}{2}\cdot\frac{B_{\bb{p}-k-1}}{k+1}\bb{p}
=(-1)^{r-1}\frac{a_{k,r,i}}{2}\cdot\frac{B_{\bb{p}-k-1}}{k+1}\bb{p}. \qedhere
\]
\end{proof}

\section{Sum formulas for $\calA_3$-finite multiple zeta values}
\label{sec:A_3-sum formula}
For positive integers $k$ and $r$ such that $r\leq k$, we set 
\[
T_{k,r}:=\sum_{\bb{k}\in I_{k,r}}\zeta_{\calA_3}(\bb{k}),\quad 
T_{k,r}^{\star}:=\sum_{\bb{k}\in I_{k,r}}\zeta_{\calA_3}^{\star}(\bb{k}).
\]
From now on, we assume that $k$ is odd. We recall a formula
\begin{equation}\label{A_3-single}
\zeta_{\calA_3}(k)=-\frac{k(k+1)}{2}\cdot\frac{B_{\bb{p}-k-2}}{k+2}\bb{p}^2
\end{equation}
proved by Sun \cite[Theorem 5.1]{ZHS}.
\begin{theorem}\label{thm:A_3-sum}
Let $k$ and $r$ be positive integers satisfying $r\leq k$, and 
assume that $k$ is odd. Then we have 
\[
(-1)^{r-1}T_{k,r}=T_{k,r}^{\star}=-\frac{k+1}{2}\binom{k}{r}\frac{B_{\bb{p}-k-2}}{k+2}\bb{p}^2.
\]
\end{theorem}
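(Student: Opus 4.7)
The plan is to imitate the proof of Proposition \ref{S_A_2,k,r} one level higher in $p$-adic precision. The main preliminary step is to lift the congruence for $(-1)^{m-1}\binom{p-1}{m}$ used in the proof of \eqref{A_2-Ohno} from modulo $p^2$ to modulo $p^3$. Starting from Wilson's identity $\prod_{j=1}^{p-1}(1-p/j)=1$, one obtains
\[
(-1)^{m-1}\binom{p-1}{m} \equiv -1 - p\sum_{n=m+1}^{p-1}\frac{1}{n} - p^2\biggl(\sum_{n=m+1}^{p-1}\frac{1}{n^2} + \sum_{m<n_1<n_2\leq p-1}\frac{1}{n_1 n_2}\biggr) \pmod{p^3},
\]
which I would substitute into $H_{p-1}^\star(\bb{k})$ and rewrite in terms of non-strict multiple harmonic sums by splitting off the boundary cases $m_r=n$, $m_r=n_1<n_2$, and $m_r<n_1=n_2$. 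After the cancellations one should find
\[
H_{p-1}^\star(\bb{k}) \equiv -\zeta_{\calA_3}^\star(\bb{k}) + \bigl(\zeta_{\calA_3}^\star(\bb{k}^+)-\zeta_{\calA_3}^\star(\bb{k},1)\bigr)\bb{p} + \bigl(\zeta_{\calA_3}^\star(\bb{k}^+,1)-\zeta_{\calA_3}^\star(\bb{k},1,1)\bigr)\bb{p}^2 \pmod{\bb{p}^3},
\]
which combined with \eqref{Ohno} applied to $\bb{k}^\vee$ yields the desired $\calA_3$-analog of \eqref{A_2-Ohno}.

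Next, I would apply this $\calA_3$-analog with $\bb{k}=(\{1\}^r)$ and $e=k-r$, so that $\bb{k}^\vee=(r)$ and $b((r);(k-r))=\binom{k}{r}$. The resulting identity reads
\begin{multline*}
\sum_{j=0}^{k-r}\zeta_{\calA_3}^\star(\{1\}^{k-r-j})T^\star_{j+r,r} \\
= \binom{k}{r}\bigl\{-\zeta_{\calA_3}(k) + (\zeta_{\calA_3}(k+1)-\zeta_{\calA_3}^\star(k,1))\bb{p} + (\zeta_{\calA_3}^\star(k+1,1)-\zeta_{\calA_3}^\star(k,1,1))\bb{p}^2\bigr\}.
\end{multline*}
For each $0\leq j<k-r$, both factors on the left are divisible by $\bb{p}$ (by \eqref{A_2-{k}^r} and Proposition \ref{S_A_2,k,r}), so the product modulo $\bb{p}^3$ is a rational multiple of $B_{\bb{p}-k+r+j}B_{\bb{p}-j-r-1}\bb{p}^2$. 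The two Bernoulli subscripts sum to $2\bb{p}-k-2$, which is odd when $k$ is odd; hence one of the two numbers has odd subscript and therefore vanishes, and only the $j=k-r$ term survives on the left, giving $T_{k,r}^\star$.

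To evaluate the right-hand side, use \eqref{A_3-single} for $\zeta_{\calA_3}(k)$; use \eqref{A_2-2} and \eqref{A_2-{k}^r} to replace the two $\bb{p}$-terms by their $\calA_2$-values; and use \eqref{A-3} and \eqref{A-2} to replace the two $\bb{p}^2$-terms by their $\calA$-values. All five contributions become rational multiples of $\frac{B_{\bb{p}-k-2}}{k+2}\bb{p}^2$, and a short polynomial computation in $k$ shows that their coefficients sum to $-(k+1)/2$. This proves the claimed formula for $T_{k,r}^\star$.

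Finally, to obtain $T_{k,r}$ I would apply \eqref{non-star to star} with $n=3$ and take $\sum_{\bb{k}\in I_{k,r}}$. The cross products $T_{l,j}T^\star_{k-l,r-j}$ are, again by Proposition \ref{S_A_2,k,r}, rational multiples of $B_{\bb{p}-l-1}B_{\bb{p}-k+l-1}\bb{p}^2$, and since the two subscripts once more sum to $2\bb{p}-k-2$, the same parity argument kills every such product when $k$ is odd. The relation collapses to $T_{k,r}^\star+(-1)^r T_{k,r}=0$, giving $T_{k,r}=(-1)^{r-1}T_{k,r}^\star$. I expect the main obstacle to lie in the first step: carefully deriving the $\bb{p}^3$-expansion of $H_{p-1}^\star(\bb{k})$ and verifying that the many a priori possible $\zeta^\star$-terms at the $\bb{p}^2$-level (including the depth-$(r+1)$ terms of the shapes $\zeta^\star(\bb{k},2)$ and $\zeta^\star(k_1,\dots,k_{r-1},k_r+2)$) cancel down to just the two displayed above.
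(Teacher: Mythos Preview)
Your proposal is correct and follows essentially the same route as the paper, which likewise expands $(-1)^{m-1}\binom{p-1}{m}$ modulo $p^3$ (citing \cite[Lemma~4.1]{S}), specializes \eqref{Ohno} at $\bb{k}=(\{1\}^r)$ to obtain exactly your displayed identity for $T_{k,r}^\star$, and then eliminates all cross terms on both sides by the same Bernoulli parity argument. Your worry about extra $\bb{p}^2$-terms is unfounded: if you rewrite your strict-sum congruence in the equivalent non-strict form $-\bigl(\sum_{m\le n_1\le n_2}\frac{1}{n_1n_2}-\frac{1}{m}\sum_{m\le n}\frac{1}{n}\bigr)p^2$, the $\bb{p}^2$-coefficient of $H_{p-1}^\star(\bb{k})$ is immediately $-\zeta^\star(\bb{k},1,1)+\zeta^\star(\bb{k}^+,1)$ with no $\zeta^\star(\bb{k},2)$ or $\zeta^\star(k_1,\dots,k_r{+}2)$ ever appearing.
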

\begin{proof}
Since a congruence
\begin{align*}
&(-1)^{m-1}\binom{p-1}{m}\\
&\equiv -1-\left(\sum_{m\leq n\leq p-1}\frac{1}{n}-\frac{1}{m}\right)p-\left(\sum_{m\leq n_1\leq n_2\leq p-1}\frac{1}{n_1n_2}-\frac{1}{m}\sum_{m\leq n\leq p-1}\frac{1}{n}\right)p^2 \pmod{p^3}
\end{align*}
holds for any odd prime $p$ and any positive integer $m$ with $m<p$ 
by \cite[Lemma 4.1]{S}, one can deduce 
\begin{equation}\label{A_3-app}
\begin{split}
&\sum_{j=0}^{k-r}\zeta_{\calA_3}^{\star}(\{1\}^{k-r-j})T_{j+r,r}^{\star}\\
&=\binom{k}{r}\left\{-\zeta_{\calA_3}(k)-\zeta_{\calA_3}^{\star}(k,1)\bb{p}
+\zeta_{\calA_3}^{\star}(k+1)\bb{p}-\zeta_{\calA_3}^{\star}(k,1,1)\bb{p}^2
+\zeta_{\calA_3}^{\star}(k+1,1)\bb{p}^2\right\}
\end{split}
\end{equation}
from the identity \eqref{Ohno} in the samy way as \eqref{A_2-app1}. Let us fix $0 \leq j < k-r$. By \eqref{A_2-{k}^r} and Proposition \ref{S_A_2,k,r}, if $j+r$ is odd, then $\zeta_{\calA_3}^{\star}(\{1\}^{k-r-j})$ is divisible by $\bb{p}$ and $T_{j+r,r}^{\star}$ is divisible by $\bb{p}^2$ and if $j+r$ is even, then $\zeta_{\calA_3}^{\star}(\{1\}^{k-r-j})$ is divisible by $\bb{p}^2$ and $T_{j+r,r}^{\star}$ is divisible by $\bb{p}$. Therefore, $\zeta_{\calA_3}^{\star}(\{1\}^{k-r-j})T_{j+r,r}^{\star}=0$ and we see that the left hand side of \eqref{A_3-app} is equal to $T_{k,r}^{\star}$. On the other hand, by using Proposition \ref{Aux-1}, Proposition \ref{Aux-2} and \eqref{A_3-single}, we see that the right hand side of \eqref{A_3-app} is equal to 
\begin{align*}
&\binom{k}{r}\Biggl[\frac{k(k+1)}{2}-\frac{1}{2}\left\{-\binom{k+2}{k}+k^2+3k+1\right\}+(k+1)\\
&\qquad-\frac{1}{2}\left\{-(k+2)+\binom{k+2}{k}\right\}-(k+2)\Biggr]\frac{B_{\bb{p}-k-2}}{k+2}\bb{p}^2\\
&=-\frac{k+1}{2}\binom{k}{r}\frac{B_{\bb{p}-k-2}}{k+2}\bb{p}^2.
\end{align*}
Hence we have $T_{k,r}^{\star}=-\frac{k+1}{2}\binom{k}{r}\frac{B_{\bb{p}-k-2}}{k+2}\bb{p}^2$. By taking $\sum_{\bb{k} \in I_{k,r}}$ of \eqref{non-star to star}, we obtain
\[
T_{k,r}^{\star}+\sum_{j=1}^{r-1}(-1)^j\sum_{l=j}^{k-r+j}T_{l,j}T_{k-l,r-j}^{\star}+(-1)^rT_{k,r}=0.
\]
Let us fix $1\leq j\leq r-1$ and $j \leq l \leq k-r+j$. By Proposition \ref{S_A_2,k,r}, if $l$ is odd, then $T_{l,j}$ is divisible by $\bb{p}^2$ and $T_{k-l,r-j}^{\star}$ is divisible by $\bb{p}$ and if $l$ is even, then $T_{l,j}$ is divisible by $\bb{p}$ and $T_{k-l,r-j}^{\star}$ is divisible by $\bb{p}^2$. Therefore, we see that $T_{l,j}T_{k-l,r-j}^{\star}=0$ and this gives $(-1)^{r-1}T_{k,r}=T_{k,r}^{\star}$. 
\end{proof}
	
\end{document}